\newtheorem{theorem}{Theorem}
\newtheorem{proposition}[theorem]{Proposition}
\newtheorem{lemma}[theorem]{Lemma} 
\newtheorem{corollary}[theorem]{Corollary}
\newtheorem{example}[theorem]{Example}
\newtheorem{remark}[theorem]{Remark}
\newcommand{\cd}{{\mathrm{cd}}}
\newcommand{\tcd}{{\mathrm{tcd}}}
\newcommand{\qq}{{\mathbb Q}}
\newcommand{\rr}{{\mathbb R}}
\newcommand{\zz}{{\mathbb Z}}
\newcommand{\ttt}{{\mathbb T}}
\newcommand{\tilt}{{\widetilde T}}
\newcommand{\tilmu}{{\tilde\mu}}
\newcommand{\redH}{{\overline H}}
\newcommand{\Hom}{{\mathrm{Hom}}}
\newcommand{\Ext}{{\mathrm{Ext}}}
\newcommand{\coind}{{\mathrm{Coind}}}
\newcommand\mapright[1]{\smash{\mathop{\longrightarrow}\limits^{#1}}}
\newcommand\mapdown[1]{\Big\downarrow\rlap{$\vcenter{\hbox{$\scriptstyle#1$}}$}}
\title{The cohomology of Bestvina-Brady groups}
\author{Ian J. Leary\thanks{Partially supported by NSF grant
DMS-0505471}
\and M\"uge Saadeto\u{g}lu\thanks{Supported by the British Council and
by the Ohio State Mathematical Research Institute}}
\date{\today}
\newenvironment{proof}[1][]{\begin{trivlist} \item[\hskip\labelsep
\emph{Proof#1.}]}{\foorp \end{trivlist}}
\newcommand{\foorp}{{\unskip\nobreak\hfil\penalty50
 \hskip1em\vadjust{}\nobreak\hfil \vrule height3pt width3pt depth0pt
 \parfillskip=0pt \finalhyphendemerits=0 \par}}
\begin{document} 

\maketitle

\emph{Dedicated to Warren Dicks, on the occasion of his 60th birthday. }

\begin{abstract} 
For each subcomplex of the standard CW-structure on any torus, 
we compute the homology of a certain infinite cyclic regular
covering space.  In all cases when the homology is finitely generated, 
we also compute the cohomology ring.  For aspherical subcomplexes of 
the torus, our computation gives the homology of the groups introduced 
by M. Bestvina and N. Brady in \cite{bb}.  We compute the
cohomological dimension of each of these groups.  
\end{abstract} 

\section{Introduction} 
Let $\ttt$ be the circle, or 1-dimensional unitary group, given a 
CW-structure with one 0-cell and one 1-cell.  Suppose also that the 
identity element of the group is chosen to be the 0-cell.  
For a set $V$, let $T(V)$ denote the direct sum $T(V)= \bigoplus_{v\in
V}\ttt$.  There is a natural 
CW-structure on $T(V)$ in which the $i$-cells are in bijective
correspondence with $i$-element subsets of $V$.  

For the purposes of this paper, a simplicial complex will be defined 
abstractly as a non-empty set of finite sets which is closed
under inclusion.  The one element members of the set of sets are the 
vertices of the simplicial complex.  Every simplicial complex 
(including the empty simplicial complex) contains 
a unique $-1$-simplex corresponding to the empty set.  

If $\sigma$ is an finite subset of $V$, the closure in $T(V)$ of 
the cell corresponding to $\sigma$ is equal to $T(\sigma)$, and 
consists of all the cells corresponding to subsets of $\sigma$.  It
follows that 
there is a bijective correspondence between simplicial complexes whose 
vertex set is contained in $V$ and non-empty subcomplexes of $T(V)$ 
(see \cite[3.23]{hatcher} for this statement in the 
case when $V$ is finite).  The empty 
simplicial complex corresponds to the subcomplex $T_\emptyset$
consisting of just the single 0-cell of $T(V)$, and a non-empty
simplicial complex $L$ corresponds to the complex $T_L$ defined by 
$$T_L = \bigcup_{\sigma\in L} T(\sigma).$$

The fundamental group of $T_L$ and the cohomology ring of $T_L$ are 
easily described in terms of $L$, and there is a 
characterisation of those $L$ for which $T_L$ is aspherical.  (We
shall describe all of these results below.)  

A point in $T(V)$ is a vector $(t_v)$ of elements of $\ttt$ indexed by
$V$, such that only finitely many $t_v$ are not the identity element.  
The group multiplication induces a map $\mu:T(V)\rightarrow \ttt$ 
which takes the point $(t_v)$ to the product of all of the
non-identity $t_v$'s.  For each $L$ this induces a cellular map 
$\mu_L:T_L\rightarrow \ttt$, and $\mu_L$ is surjective when $L$ is 
non-empty.  Our aim is to 
study the homology and cohomology of the space $\tilt_L$, the 
infinite cyclic cover of $T_L$ obtained by pulling back the universal 
cover of $\ttt$ via $\mu_L$.   
$$\begin{array}{ccc} 
\tilt_L&\mapright{}&\rr \\
\mapdown{} &&\mapdown{} \\
T_L&\mapright{\mu_L}&\ttt\\
\end{array}
$$

For each $L$, we describe the homology of $\tilt_L$, together with 
information about the 
$\zz$-action induced by the action of $\zz$ by deck transformations 
on $\tilt_L$.  We deduce that for any non-trivial 
ring $R$, the 
$R$-homology of $\tilt_L$ is finitely generated as an $R$-module 
if and only if $L$ is finite and $R$-acyclic.  In all cases when 
$L$ is $R$-acyclic, we give a complete description of the cohomology
ring $H^*(\tilt_L;R)$.  

Let $G_L$ denote the fundamental group of $T_L$.  Since the
fundamental group of a CW-complex depends only on its 2-skeleton, the 
group $G_L$ depends only on the 1-skeleton of the simplicial complex
$L$.  The presentation for $G_L$ coming from the cell structure on 
$T_L$ has one generator for each vertex of $L$, subject only to the 
relation that the generators $v$ and $w$ commute whenever $\{v,w\}$ is
an edge in $L$.  These groups are known as right-angled Artin groups. 
It can be shown that $T_L$ is aspherical if and only if $L$ is a flag 
complex.  Every simplicial complex may be completed to a flag complex 
with the same 1-skeleton (just add in a simplex for each finite complete
subgraph of the 1-skeleton) and so one sees that the spaces of the form 
$T_L$ include models for the classifying spaces of all right-angled 
Artin groups.  

When $L$ is non-empty, $\tilt_L$ is connected and the 
fundamental group of $\tilt_L$ is the kernel of the induced map
$\mu_*: G_L\rightarrow \zz$, which sends each of the generators for 
$G_L$ to $1\in \zz$.  Call this group $H_L$.  The groups $H_L$ are 
known as Bestvina-Brady groups.  In the case when $L$ is a finite 
flag complex, M. Bestvina and N. Brady showed that the homological 
finiteness properties of $H_L$ are determined by the homotopy type 
of~$L$.  For example, they show that $H_L$ is finitely presented if
and only if $L$ is 1-connected \cite{bb}.  For an explicit
presentation for $H_L$ for any $L$, see \cite{dl}.  

Let $L$ be an $n$-dimensional flag complex.  It is easy to show 
that in this case, the cohomological dimension of the group $G_L$ 
is equal to $n+1$.  The cohomological dimension of $G_L$ over any 
non-trivial ring $R$ is also equal to $n+1$.  
It also follows easily that the cohomological 
dimension of $H_L$ is equal to either $n$ or $n+1$.  Our computations 
together with some of the results from \cite{bb} allow us to determine 
the cohomological dimension of $H_L$, at least in the case when $R$ is 
either a field or a subring of the rationals.  If $n=0$ and 
$L$ is a single point, 
then $H_L$ is the trivial group.  Otherwise, if there exists an $R$-module 
$A$ such that $H^n(L;A)\neq 0$, then $H_L$ has cohomological dimension 
$n+1$ over $R$.  If there exists no such $A$, then $H_L$ 
has cohomological dimension $n$
over $R$.  Note that in contrast to the case of $G_L$, the
cohomological dimension of $H_L$ may vary with the choice of ring $R$.  
As a corollary we deduce that the trivial cohomological 
dimension and cohomological dimension of $H_L$ are equal.  

Some of these results appeared, with slightly different proofs, in the 
Southampton PhD thesis of the second named author.  For some results, 
we give a brief sketch of a second proof.  Some computations of 
low-dimensional ordinary cohomology (and many other algebraic
invariants) for a special class of the Bestvina-Brady groups 
also appear in a recent preprint of S. Papadima and A. Suciu~\cite{pasu}.

\section{Homology and cohomology of $T_L$} 

The differential in the cellular chain complex for $T(V)$ is trivial,
and hence so is the differential in the cellular chain complex for 
$T_L$, for any $L$.  It follows that for any ring $R$, $H_i(T_L;R)$ is a 
free $R$-module with basis the $i$-cells of $T_L$, or equivalently the 
$(i-1)$-simplices of $L$.  
The differential in the cellular cochain complex is also trivial. 
The group $H^i(T_L;R)$ is isomorphic to a direct product of copies of 
$R$ indexed by the $(i-1)$-simplices of $L$.  To describe the ring 
structure on the cohomology, we first consider the case of the torus 
$T(V)$.  

The cohomology ring 
$H^*(T(V);R)$ can be described as the exterior algebra
$\Lambda^*_{R,V}$.  A homogeneous element $f\in \Lambda^n_{R,V}$ is an 
alternating function $f:V^n\rightarrow R$, where we say that a 
function is alternating if the following two conditions are satisfied:  
\begin{enumerate} 
\item $f(v_1,\ldots,v_n)= 0$ whenever there 
exists $1\leq i<j\leq n$ with $v_i=v_j$; 
\item $f(v_1,\ldots, v_i,v_{i+1},\ldots,v_n) = -f(v_1,\ldots,
  v_{i+1},v_i,\ldots,v_n)$ for any $i$ with $1\leq i <n$.  
\end{enumerate} 
If $f\in \Lambda^i$ and $g\in \Lambda^{n-i}$, the product 
$f.g$ is the so-called `shuffle product'.  This is defined in 
terms of the pointwise product by the equation 
$$f.g(v_1,\ldots,v_n)=\sum_\pi \epsilon(\pi) 
f(v_{\pi(1)},\ldots,v_{\pi(i)})g(v_{\pi(i+1)},\ldots,v_{\pi(n)}),$$ 
where $\epsilon(\pi)\in\{\pm 1\}$ denotes the sign of the permutation $\pi$, 
and the summation ranges over all permutations $\pi$ such that 
$$\pi(1)<\pi(2)<\cdots <\pi(i)\quad\hbox{and}\quad \pi(i+1)<\pi(i+2)< 
\cdots < \pi(n).$$ 
(The `shuffles' or permutations of the above type are chosen because
they are a set of coset representatives in $S_n$ for the subgroup 
$S_i\times S_{n-i}$, so that each $i$-element subset of
$\{1,\ldots,n\}$ is equal to $\{\pi(1),\ldots,\pi(i)\}$ for exactly
one such~$\pi$.  Any other set of coset representatives could be used 
instead.)   

There is a similar description of the ring structure on $H^*(T_L;R)$ 
for any simplicial complex $L$, as the exterior face ring
$\Lambda^*_R(L)$ of $L$.  If $V$ is the vertex set of $L$, 
$\Lambda^*_R(L)$ is the quotient of $\Lambda^*_{R,V}$ by 
the homogeneous ideal $I_L$, with generators the functions 
that vanish on every
$n$-tuple $(v_1,\ldots, v_n)$ which does not span a simplex of $L$.  
The inclusion of $T_L$ in $T(V)$ induces a homomorphism of cohomology
rings 
$$\Lambda^*_{R,V}\cong H^*(T(V);R)\rightarrow H^*(T_L;R),$$
and it is easy to check (from the additive description of $H^*(T_L;R)$ 
given above) that this homomorphism is surjective and that its kernel 
is $I_L$.  Hence one 
obtains a theorem which was first stated in \cite{kimroush} in the
case when $L$ is finite:  

\begin{theorem} \label{facering} 
For any simplicial complex $L$ and ring $R$, the 
cohomology ring $H^*(T_L;R)$ is isomorphic to the exterior face 
ring $\Lambda^*_R(L)$. 
\end{theorem}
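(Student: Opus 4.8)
The plan is to realise the asserted isomorphism as the ring map $\iota^*\colon H^*(T(V);R)\to H^*(T_L;R)$ induced by the inclusion $\iota\colon T_L\hookrightarrow T(V)$, and to compute its kernel directly at the level of cellular cochains. Since the cellular cochain complex of $T(V)$, and hence that of the subcomplex $T_L$, has trivial differential, cohomology coincides with cochains in each degree: $H^n(T(V);R)$ is the module of functions on the set of $n$-cells of $T(V)$, which under the standard identification with alternating functions on $V^n$ is $\Lambda^n_{R,V}$, and likewise $H^n(T_L;R)$ is the module of functions on the $n$-cells of $T_L$, equivalently on the $(n-1)$-simplices of $L$. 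Because $\iota$ is a cellular inclusion, $\iota^*$ is induced by the map of cellular cochain complexes dual to the inclusion of cellular chain complexes; that is, $\iota^*$ is restriction of a function on the $n$-cells of $T(V)$ to those $n$-cells that lie in $T_L$. As $\iota$ is a continuous map, $\iota^*$ is in addition a homomorphism of graded cohomology rings, and it is precisely the map $\Lambda^*_{R,V}\cong H^*(T(V);R)\to H^*(T_L;R)$ named in the statement.

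I would then verify surjectivity and compute the kernel. Given any function $\psi$ on the $(n-1)$-simplices of $L$, extend it by zero to a function on all $n$-element subsets of $V$; the corresponding alternating function $f\in\Lambda^n_{R,V}$ is automatically a cocycle and restricts to $\psi$, so $\iota^*$ is onto in every degree. An element $f\in\Lambda^n_{R,V}$ lies in $\ker\iota^*$ precisely when $f$ vanishes on every $n$-tuple that spans a simplex of $L$. Write $J_L$ for the set of all such homogeneous $f$, taken over all $n$. Since an $n$-tuple spans a simplex of $L$ only if each of its sub-tuples does, $J_L$ is an ideal of $\Lambda^*_{R,V}$: if $f\in J_L$ is homogeneous and $g$ is arbitrary, then every term in the shuffle-product formula for $f.g$ evaluated on a simplex-spanning tuple contains a factor $f(\ldots)$ evaluated on a simplex-spanning sub-tuple, hence vanishes, so $f.g\in J_L$.

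It remains to identify $J_L$ with the ideal $I_L$, and this identification is the only step requiring genuine care. Each generator of $I_L$ — the alternating function supported on the permutations of a single non-face $\{v_1,\ldots,v_k\}$ of $L$ — vanishes on all simplex-spanning tuples, so $I_L\subseteq J_L$. Conversely, fix a degree $n$; every element of $J_L$ in that degree is an $R$-linear combination of the basic alternating functions supported on individual $n$-element non-faces of $L$, and since $n$ is finite each such non-face $\sigma$ contains a minimal non-face $\tau$, so that the basic function on $\sigma$ factors, up to sign, as the product of the generator of $I_L$ attached to $\tau$ with the basic function on $\sigma\setminus\tau$, and therefore lies in $I_L$. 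Hence $J_L=I_L$, and the first isomorphism theorem for graded rings turns $\iota^*$ into a ring isomorphism $\Lambda^*_R(L)=\Lambda^*_{R,V}/I_L\xrightarrow{\ \sim\ }H^*(T_L;R)$, as required.
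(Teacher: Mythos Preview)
Your argument is correct and follows exactly the route the paper takes: use the cellular inclusion $\iota\colon T_L\hookrightarrow T(V)$, identify $\iota^*$ with restriction of functions because both cochain differentials vanish, and then read off surjectivity and the kernel; the paper compresses all of this into the phrase ``it is easy to check.'' Your final paragraph in fact proves a little more than is needed here: in the paper $I_L$ is simply defined as the $R$-module of homogeneous alternating functions vanishing on every simplex-spanning tuple (so $I_L=J_L$ by definition and the ideal property is immediate from the ring map), whereas you take $I_L$ to be the ideal \emph{generated} by the basic non-face monomials and then verify---correctly, via the factorisation through a minimal non-face---that these two descriptions coincide.
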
  

For any path-connected space $X$, there is a natural isomorphism 
between $H^1(X;\zz)$ and $\Hom(\pi_1(X),\zz)$.  The element of 
$H^1(T_L;\zz)=\Lambda^1_\zz(L)$ that corresponds to the 
homomorphism $\mu_*:G_L\rightarrow \zz$ is the element $\beta_L$, 
the constant function which takes each vertex of $L$ to $1\in \zz$.  
By a slight abuse of notation, write $\beta_L$ also for the element 
of $\Lambda_R^1(L)$ that takes each vertex of $L$ to $1\in R$.  

In any anticommutative ring, multiplication by an element of odd 
degree gives rise to a differential.  The cochain complex structure 
on $\Lambda_R^*(L)$ given by multiplication by $\beta_L$ is easily 
described.  

\begin{theorem} \label{bbdiff} 
For any ring $R$, there is a natural isomorphism of
cochain complexes 
$$(\Lambda_R^*(L),\beta\times)\cong C^{*-1}_+(L;R)$$ 
between the exterior face ring of $L$ with differential given by 
left multiplication by $\beta_L$, and the augmented simplicial cochain
complex of $L$ shifted in degree by one.  
\end{theorem}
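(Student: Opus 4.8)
The plan is to write down an explicit $R$-linear map in each degree between $\Lambda^n_R(L)$ and $C^{n-1}_+(L;R)$ and check that it is an isomorphism of cochain complexes. Recall that the augmented simplicial cochain complex has $C^{n-1}_+(L;R)$ equal to the group of functions from the set of $(n-1)$-simplices of $L$ to $R$, with the convention that the $(-1)$-simplex (the empty set) contributes a copy of $R$ in degree $-1$, and the coboundary is the usual alternating sum. On the other side, an element of $\Lambda^n_R(L)$ is represented by an alternating function $f\colon V^n\to R$ that vanishes on tuples not spanning a simplex, modulo those that vanish on tuples that do span a simplex; so a class in $\Lambda^n_R(L)$ is the same data as an alternating function defined only on the $n$-tuples of vertices spanning an $(n-1)$-simplex. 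After ordering the vertex set $V$, such a class is determined by its values on strictly increasing tuples, i.e. by a function on the $(n-1)$-simplices of $L$. This gives the candidate isomorphism $\Phi_n\colon\Lambda^n_R(L)\xrightarrow{\ \cong\ }C^{n-1}_+(L;R)$; it is visibly $R$-linear, natural in $L$ and in $R$, and bijective. (In degree $0$, $\Lambda^0_R(L)=R$ maps to the degree $-1$ part $C^{-1}_+(L;R)=R$ coming from the empty simplex.)

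**Identifying the differentials.** The content of the theorem is that $\Phi_{n+1}\circ(\beta_L\times)=\delta\circ\Phi_n$, where $\delta$ is the simplicial coboundary. Fix an increasing tuple $(v_0<v_1<\cdots<v_n)$ spanning an $(n-1)$-simplex $\tau$ of $L$; I need to compute $(\beta_L\cdot f)(v_0,\ldots,v_n)$ for $f\in\Lambda^n_R(L)$ and match it against $(\delta g)(\tau)=\sum_{i=0}^n(-1)^i g(\partial_i\tau)$ where $g=\Phi_n(f)$. Using the shuffle-product formula from the excerpt with the factorisation into degrees $1$ and $n$: the shuffles $\pi\in S_{n+1}$ with $\pi(1)$ arbitrary and $\pi(2)<\cdots<\pi(n+1)$ are exactly indexed by the choice $i=\pi(1)-1\in\{0,\ldots,n\}$, with sign $\epsilon(\pi)=(-1)^i$. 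Hence
$$(\beta_L\cdot f)(v_0,\ldots,v_n)=\sum_{i=0}^n(-1)^i\,\beta_L(v_i)\,f(v_0,\ldots,\widehat{v_i},\ldots,v_n)=\sum_{i=0}^n(-1)^i f(v_0,\ldots,\widehat{v_i},\ldots,v_n),$$
since $\beta_L(v_i)=1$. Each face $(v_0,\ldots,\widehat{v_i},\ldots,v_n)$ is an increasing $n$-tuple spanning the face $\partial_i\tau$, so $f$ evaluated there is exactly $g(\partial_i\tau)$, and the right-hand side is $(\delta g)(\tau)$. This establishes the commuting square; one should separately note the low-degree edge case, where $\beta_L\times$ sends $\Lambda^0_R(L)=R$ to $\Lambda^1_R(L)$ by $r\mapsto r\beta_L$, matching the augmentation map $C^{-1}_+(L;R)\to C^0_+(L;R)$.

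**Main obstacle and bookkeeping.** The genuine mathematical steps are all routine; the main thing to be careful about is the \emph{passage to the quotient} $\Lambda^n_{R,V}\to\Lambda^n_R(L)$ — one must verify that restricting an alternating function to tuples spanning simplices of $L$ is well-defined and that the shuffle product descends compatibly, i.e. that $\beta_L\times$ preserves the ideal $I_L$. This is immediate because $\beta_L$ is supported on vertices (all of which are simplices of $L$), so the shuffle product of $\beta_L$ with a function vanishing off $L$ again vanishes off $L$; thus $\beta_L\times$ is well-defined on $\Lambda^*_R(L)$. The other point requiring attention is sign/indexing consistency: I must fix one linear order on $V$ at the outset, use it both to pick the increasing-tuple representatives of $\Lambda^*_R(L)$ and to orient the simplices of $L$, and then the identification of $\epsilon(\pi)$ with $(-1)^i$ for the relevant shuffles makes the signs match on the nose. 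Finally, naturality in $L$ (for simplicial inclusions) and in $R$ follows because every map in sight is defined by the same formula on values of functions; this is worth one sentence but needs no computation.
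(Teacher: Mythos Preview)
Your proof is correct and follows essentially the same approach as the paper: both identify $\Lambda^n_R(L)$ with $R$-valued functions on $(n-1)$-simplices and then verify directly from the shuffle-product formula that $(\beta_L\cdot f)(v_0,\ldots,v_n)=\sum_j(-1)^jf(v_0,\ldots,\widehat{v_j},\ldots,v_n)=\delta f(v_0,\ldots,v_n)$, using $\beta_L(v_j)=1$. Your write-up adds useful bookkeeping (the quotient well-definedness, the degree-$0$ edge case, naturality) that the paper leaves implicit; note only that in your second paragraph the tuple $(v_0<\cdots<v_n)$ spans an $n$-simplex, not an $(n-1)$-simplex, so $\tau$ should be called an $n$-simplex there.
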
 

\begin{proof} In degree $i$, each of the two graded $R$-modules is 
isomorphic to a direct product of copies of $R$ indexed by the
$(i-1)$-simplices of $L$, or equivalently the $R$-valued functions 
on the oriented $(i-1)$-simplices of $L$, where
$f(-\sigma)=-f(\sigma)$ if $-\sigma$ is the same simplex as $\sigma$
with the opposite orientation.  It remains to show that this
isomorphism is compatible with the differentials on the two cochain
complexes.  

Let $f$ be an $R$-valued function on the $(i-1)$-simplices of $L$,
and compare the functions $\beta.f$ and $\delta f$, the image of 
$f$ under the differential on $C_+^{*-1}(L;R)$.      
If $(v_0,\ldots,v_i)$ is 
the vertex set of an oriented $i$-simplex of $L$, then 
\begin{eqnarray*}
\beta.f(v_0,\ldots, v_i)&=& \sum_{j=0}^i (-1)^j
\beta(v_j)f(v_0,\ldots,v_{j-1},v_{j+1},\ldots,v_i)\\ 
&=&\sum_{j=0}^i (-1)^j f(v_0,\ldots,v_{j-1},v_{j+1},\ldots,v_i) \\
&=&\delta f(v_0,\ldots,v_i).\\
\end{eqnarray*} 
This 
completes the proof.  
\end{proof} 

\section{Higher homotopy of $T_L$} 

Recall that a full subcomplex $M$ of a simplicial complex $L$ is a 
subcomplex such that if $\sigma$ is any simplex of $L$ and each 
vertex of $\sigma$ is in $M$, then $\sigma$ is in~$M$.  

\begin{proposition} 
If $M$ is a full subcomplex of $L$, then $T_M$ is a retract of $T_L$.  
\end{proposition}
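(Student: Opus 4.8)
The plan is to exhibit the retraction explicitly as the restriction of a coordinate projection between tori. Let $V$ be the vertex set of $L$ and let $W\subseteq V$ be the vertex set of $M$, so that $T_M\subseteq T_L\subseteq T(V)$ and $T(W)\subseteq T(V)$. Since $T(V)=\bigoplus_{v\in V}\ttt$, there is a group homomorphism $p:T(V)\rightarrow T(W)$ that forgets the coordinates indexed by $V\setminus W$, i.e.\ $p$ sends the point $(t_v)_{v\in V}$ to $(t_w)_{w\in W}$. This map is continuous, it is cellular for the given CW-structures (the open cell indexed by a finite set $\sigma\subseteq V$ is carried onto the open cell indexed by $\sigma\cap W$), and its restriction to $T(W)$ is the identity map.

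The key step is to check that $p$ carries $T_L$ into $T_M$. By definition $T_L=\bigcup_{\sigma\in L}T(\sigma)$, so it suffices to show that $p(T(\sigma))\subseteq T_M$ for each simplex $\sigma$ of $L$. From the description of $p$ one has $p(T(\sigma))=T(\sigma\cap W)$, so I need $\sigma\cap W$ to be a simplex of $M$. Now $\sigma\cap W$ is a subset of $\sigma$, hence a member of $L$ because a simplicial complex is closed under inclusion; and every vertex of $\sigma\cap W$ lies in $W$, that is, in $M$. Since $M$ is a full subcomplex of $L$, this forces $\sigma\cap W\in M$, and therefore $T(\sigma\cap W)\subseteq T_M$. (When $\sigma\cap W=\emptyset$ this reads $T_\emptyset\subseteq T_M$, which holds since $T_\emptyset$ is the single base $0$-cell.)

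Consequently $p$ restricts to a cellular map $r:T_L\rightarrow T_M$. Since $T_M\subseteq T(W)$ and $p$ is the identity on $T(W)$, the map $r$ is the identity on $T_M$; hence $r$ is the desired retraction. The one place where the hypothesis enters, and the only point requiring any care, is the identity $p(T(\sigma))=T(\sigma\cap W)$ together with the appeal to fullness to conclude $\sigma\cap W\in M$; without the fullness assumption the projection $p$ need not send $T_L$ into $T_M$, so I do not expect an essentially simpler argument to be available.
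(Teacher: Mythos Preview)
Your proof is correct and follows the same approach as the paper: both construct the retraction by restricting the coordinate projection $T(V)\to T(W)$ to $T_L$. You are in fact more explicit than the paper about the one nontrivial point---verifying that $p(T(\sigma))=T(\sigma\cap W)\subseteq T_M$ via the fullness hypothesis---which the paper leaves implicit.
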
 

\begin{proof} 
Let $W$ be a subset of $V$.  There is an isomorphism of topological 
groups $T(V)\cong T(W)\oplus T(V-W)$.  The inclusion 
$$i: T(W)\cong T(W)\oplus \{1\} \rightarrow T(V)$$ 
and projection 
$$\pi: T(V) \rightarrow T(V)/\{1\}\oplus T(V-W)\cong T(W)$$ 
satisfy $\pi\circ i=1_{T(W)}$, and show that $T(W)$ is a retract 
of $T(V)$.  

Now suppose that $L$ is a simplicial complex with vertex set $V$ and 
that $M$ is the full subcomplex with vertex set $W\subseteq V$.  Then 
$T_L$ is a subcomplex of $T(V)$, and $T_M$ is a subcomplex of $T(W)$.  The 
maps $i$ and $\pi$, when restricted to $T_M$ and $T_L$, show that
$T_M$ is a retract of $T_L$ as claimed.  
\end{proof} 


Recall that a simplicial complex $L$ is said to be flag if every 
finite complete subgraph of the 1-skeleton of $L$ is the 1-skeleton 
of a simplex of $L$.  Any full subcomplex of a flag complex is flag.  

\begin{proposition} \label{aspher} 
$T_L$ is aspherical if and only if $L$ is a 
flag complex.  
\end{proposition}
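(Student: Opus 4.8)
The plan is to prove both directions by analysing the universal cover of $T_L$. Recall that $T(V)$ has universal cover $\rr^V_{\mathrm{fin}}$ (the direct sum of copies of $\rr$), which is contractible, so $T(V)$ is always aspherical. The universal cover $\widetilde{T_L}$ of $T_L$ can be built inside $\rr^V_{\mathrm{fin}}$ as the preimage of $T_L$ under the covering map $\rr^V_{\mathrm{fin}} \to T(V)$, at least once we know that $T_L \hookrightarrow T(V)$ is $\pi_1$-injective; this injectivity is visible from the retraction results, since for each edge or non-edge one can retract onto the corresponding coordinate circle or pair of circles. So the first step is to describe the cover $X_L \subseteq \rr^V_{\mathrm{fin}}$ explicitly: it is a cubical complex, a union of translates of coordinate cubes $C(\sigma) = \{x : x_v \in [n_v, n_v+1]\ \text{for } v \in \sigma,\ x_w \in \zz\ \text{for } w \notin \sigma\}$ indexed by $\sigma \in L$ and integer vectors $(n_v)$. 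The claim $T_L$ aspherical $\iff$ $L$ flag then becomes: $X_L$ is contractible $\iff$ $L$ flag.

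For the forward direction (or rather its contrapositive), suppose $L$ is not flag, so there is a finite set $\{v_0,\dots,v_k\}$ of vertices pairwise joined by edges but not spanning a simplex of $L$; choose such a set with $k$ minimal. Restricting attention (via the retraction onto the full subcomplex on these vertices) I may assume $L$ is exactly the boundary of a $k$-simplex, i.e. $T_L$ is the subcomplex of the $k$-torus obtained by deleting the unique top cell. Then $X_L$ is $\rr^{k+1}$ with a $\zz^{k+1}$-lattice of open $(k+1)$-cubes removed; this deformation retracts onto something homotopy equivalent to an infinite wedge of $k$-spheres (one for each removed cube), so $\pi_k(T_L) \neq 0$ and $T_L$ is not aspherical. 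The key point here is the minimality of $k$, which guarantees that the relevant full subcomplex is genuinely $\partial\Delta^k$ and not something with extra cells filling in lower faces.

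For the converse, suppose $L$ is flag; I want to show $X_L$ is contractible. The natural tool is a discrete Morse-theoretic or "stacking" argument: use the height function $h(x) = \sum_v x_v$ (the lift of $\mu_L$) to push $X_L$ down, or equivalently build $X_L$ as an increasing union of contractible subcomplexes. Concretely, one orders the top-dimensional cubes (or the vertices of $\zz^V$) and shows that adding each new cube along a contractible face does not change the homotopy type; flagness is exactly what ensures that when all the vertices of a cube's "lower boundary" are present, the cube itself is present, so nothing is ever attached along a non-contractible piece. I expect this inductive contractibility argument — making precise that the attaching maps are along contractible subcomplexes, and handling the fact that $X_L$ is infinite-dimensional when $L$ is infinite — to be the main obstacle; the non-flag direction is comparatively clean once the homotopy type of the complement of a lattice of cubes is identified. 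An alternative for the converse is to quote that flag complexes are exactly the ones for which the associated cube complex $X_L$ is CAT(0) (it is nonpositively curved — Gromov's link condition for cube complexes is precisely flagness of the links, and the links here are copies of $L$), hence contractible; I would mention this as a remark but give the elementary inductive argument as the main proof.
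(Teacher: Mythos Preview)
Your non-flag direction is essentially the paper's: both retract onto the full subcomplex $M\cong\partial\Delta^k$ and exhibit a nontrivial class in $\pi_k$. The paper simply observes that $T_M$ is the $k$-skeleton of a $(k{+}1)$-torus, hence has nonzero $\pi_k$; your description via the cover is an equivalent rephrasing.

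For the flag direction you take a genuinely different route. The paper first reduces to finite $L$ by compactness of spheres, and then argues by induction on $|V|$: if $L$ is not a simplex, choose two non-adjacent vertices $v_1,v_2$, let $L_i$ be the full subcomplex on $V\setminus\{v_i\}$ and $L_3=L_1\cap L_2$; then $T_L=T_{L_1}\cup_{T_{L_3}}T_{L_2}$, each piece is aspherical by induction, the retraction proposition gives $\pi_1(T_{L_3})\hookrightarrow\pi_1(T_{L_i})$, and Whitehead's theorem on unions of aspherical spaces finishes. This is shorter and more elementary than your proposed Morse/stacking contraction of $X_L$, avoids the infinite-dimensional bookkeeping you flag, and uses no CAT(0) geometry; the paper mentions the CAT(0) argument only as a remark, precisely to emphasise that the metric technology is unnecessary. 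One small correction to your CAT(0) aside: the link of a vertex in $X_L$ is not $L$ itself but its ``doubling'' (each vertex $v$ replaced by a pair $v^{\pm}$), though since that complex is flag if and only if $L$ is, your conclusion is unaffected.
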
 

\begin{proof} A subset of a CW-complex that meets the interior of 
infinitely many cells contains an infinite discrete set, so cannot 
be compact.  Hence any map from a sphere to a CW-complex has image 
inside a finite subcomplex and any homotopy between maps of a sphere
into a CW-complex has image contained in a finite subcomplex.  Thus 
it suffices to consider the case when $L$ is finite.  

Suppose that $L$ is a finite flag complex with vertex set $V$.  
If $L$ is an $n$-simplex, then $T_L$ is an $(n+1)$-torus, and 
so $T_L$ is aspherical.  If $L$ is not a simplex, then there exist 
$v_1, v_2\in V$ so that there is no edge in $L$ 
from $v_1$ to $v_2$.  For $i=1,2$, let $L_i$ be the full subcomplex of 
$L$ with vertex set $V-\{v_i\}$, and define $L_3$ by $L_3=L_1\cap
L_2$.  Then $L=L_1\cup L_2$, and each of 
$L_1$, $L_2$ and $L_3$ is flag.  By induction, $T_{L_i}$
is aspherical for $i=1,2,3$.  Also $T_{L_3}= T_{L_1}\cap T_{L_2}$ is a
subcomplex of both $T_{L_1}$ and $T_{L_2}$.  The fundamental group of 
$T_{L_3}$ maps injectively to the fundamental group of each of
$T_{L_1}$ and $T_{L_2}$, since $T_{L_3}$ is a retract of each of 
$T_{L_1}$ and $T_{L_2}$.  A theorem of Whitehead~\cite[1.B.11]{hatcher}
implies that $T_L= T_{L_1}\cup T_{L_2}$ is aspherical.  

Conversely, suppose that $L$ is not flag.  Then $L$ contains a
full subcomplex $M$ which is equal to the boundary of an $n$-simplex for 
some $n>1$.  Then $T_M$ is the $n$-skeleton of an $(n+1)$-torus, 
and so $\pi_n(T_M)$ is non-zero.  Since $T_M$ is a retract of 
$T_L$ it follows that $\pi_n(T_L)$ is non-zero too.  
\end{proof} 

\begin{remark} There is also a metric proof that $T_L$ is aspherical 
whenever $L$ is a finite flag complex: in this case the geodesic 
metric induced by the standard product metric on $T(V)$ is locally 
CAT(0).  A version of the Cartan-Hadamard theorem shows that any 
locally CAT(0) metric space is aspherical \cite{bb}.  
We give the above proof 
instead to emphasize that the metric technology is not needed.  
\end{remark} 

\section{Homology of $\tilt_L$} 
\label{sec:hom}

Let $Z$ denote the fundamental group of $\ttt$, an infinite cyclic 
group.  Since $\tilt_L$ is defined in terms of $\mu_L:T_L\rightarrow 
\ttt$ by pulling back the universal covering space of $\ttt$, $Z$ acts
via deck transformations on $\tilt_L$.  When $L$ is non-empty, the 
map $\mu_L$ induces an isomorphism $G_L/H_L\cong Z$.  In this section 
we describe the cellular chain complex and homology of $\tilt_L$ as a 
$Z$-module, for every $L$.  Let $C^+_*(L)$ denote the augmented 
cellular chain complex of $L$, and let $d=d_L$ be its differential.  

\begin{proposition} \label{chaincomp}
The cellular chain complex $C_*(\tilt_L)$ is 
isomorphic to $\zz[Z]\otimes C^+_{*-1}(L)$ with differential $(1-z)\otimes 
d_L$.  
\end{proposition}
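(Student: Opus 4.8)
The plan is to analyse directly the CW-structure that $\tilt_L$ inherits as a covering space of $T_L$. Since $\mu_L$ is cellular and the universal covering $\rr\to\ttt$ is a cellular covering for the standard CW-structures, the induced map $\tilt_L\to T_L$ is a regular covering with deck group $Z$, cellular for the pulled-back structure; hence the preimage of each open cell of $T_L$ is a disjoint union of open cells permuted freely and transitively by $Z$. Thus $C_i(\tilt_L)$ is a free $\zz[Z]$-module with one basis orbit for every $i$-cell of $T_L$, i.e. for every $(i-1)$-simplex of $L$, together with a single basis orbit of $0$-cells corresponding to the $(-1)$-simplex of $L$. This already gives the additive isomorphism $C_i(\tilt_L)\cong\zz[Z]\otimes C^+_{i-1}(L)$ for all $i$; the content of the proposition is to choose the $\zz[Z]$-basis so that the boundary becomes $(1-z)\otimes d_L$.

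To do this, fix a total order on the vertex set $V$ of $L$. For a simplex $\sigma=\{v_1<\cdots<v_i\}$ of $L$, the characteristic map $\Phi_\sigma\colon[0,1]^i\to T_L$ of the corresponding $i$-cell sends $(s_1,\ldots,s_i)$ to the point whose $v_j$-coordinate is the image of $s_j$ under the covering map $\rr\to\ttt$ and whose remaining coordinates are the identity. Composing with $\mu_L$ gives the map sending $(s_1,\ldots,s_i)$ to the image of $s_1+\cdots+s_i$ in $\ttt$, which lifts to the sum map $(s_1,\ldots,s_i)\mapsto s_1+\cdots+s_i$ of $[0,1]^i$ into $\rr$. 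I would use the lift of $\Phi_\sigma$ that sends the origin of $[0,1]^i$ to the basepoint $0\in\rr$ to single out a preferred lift $\tilde C_\sigma$ of $C_\sigma$, oriented by pushing forward the standard orientation of $[0,1]^i$, and take the family $\{\tilde C_\sigma\}$ as the $\zz[Z]$-basis. The reason for this normalisation is that the restriction of the chosen lift $\tilde\Phi_\sigma$ to the facet $\{s_j=0\}$ of the cube is exactly $\tilde\Phi_{\sigma\setminus\{v_j\}}$, whereas on the facet $\{s_j=1\}$ the lift of $\mu_L\circ\Phi_\sigma$ takes the value $1+(s_1+\cdots+\widehat{s_j}+\cdots+s_i)$, so that this facet maps onto $z\cdot\tilde C_{\sigma\setminus\{v_j\}}$.

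Granting this, the boundary of $\tilde C_\sigma$ in $C_*(\tilt_L)$ is computed facet by facet: the boundary of $[0,1]^i$ is an alternating sum of its $2i$ facets $\{s_j=\varepsilon\}$, in which $\{s_j=0\}$ and $\{s_j=1\}$ occur with opposite signs, so applying $\tilde\Phi_\sigma$ yields $\partial\tilde C_\sigma=\sum_j(-1)^{j-1}(1-z)\,\tilde C_{\sigma\setminus\{v_j\}}$. Identifying $\tilde C_\tau$ with $1\otimes\tau$, the right-hand side is $(1-z)\otimes d_L(\sigma)$, which is precisely the differential $(1-z)\otimes d_L$; the identical computation in the bottom two degrees produces the boundary $C_1(\tilt_L)\to C_0(\tilt_L)$ and matches it with $(1-z)\otimes d_L$ applied to the augmentation $C^+_0(L)\to C^+_{-1}(L)$, and the degenerate case $L=\emptyset$, where $\tilt_L$ is a single $Z$-orbit of $0$-cells, is immediate. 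As an alternative, one could instead observe that $T_L$ is a subcomplex of $T(V)$ and that $\tilt_L$ is exactly the part of the corresponding infinite cyclic cover of $T(V)$ lying over $T_L$, so that $C_*(\tilt_L)$ is the span of the cells indexed by simplices of $L$; this reduces the statement to the case in which $L$ is a simplex, where $T_L$ is a torus and $C_*(\tilt_L)$ is the base change along $\zz\to Z$ of a standard Koszul-type complex.

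The main obstacle is essentially bookkeeping: reconciling the orientation signs produced by the product CW-structure on $[0,1]^i$ with the simplicial signs built into $d_L$, and verifying carefully that crossing the cell $C_\sigma$ in the $v_j$-direction advances the sheet of the covering by exactly one. Both points come down to the single observation that, in the coordinates above, $\mu_L$ restricted to $C_\sigma$ is the composite of the sum map $[0,1]^i\to\rr$ with $\rr\to\ttt$; once that is pinned down the remaining verifications are routine, the consistency of the signs being guaranteed by the Leibniz rule for the cellular chains of a product of CW-complexes.
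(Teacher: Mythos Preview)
Your argument is correct and is essentially the same as the paper's, only with considerably more detail supplied. The paper simply fixes a $0$-cell $v$ of $\tilt_L$ and chooses, in each $Z$-orbit of cells, the representative that contains $v$ but not $z^{-1}v$ as a vertex, then asserts that the boundary map becomes $(1-z)\otimes d_L$; your choice of the lift $\tilde\Phi_\sigma$ sending the origin of $[0,1]^i$ to the basepoint over $0\in\rr$ is exactly this representative, and your facet-by-facet computation is the verification the paper leaves to the reader. One minor point: depending on whether your cube-boundary convention gives the facet $\{s_j=1\}$ the sign $(-1)^{j-1}$ or $(-1)^{j}$, you may obtain $(z-1)\otimes d_L$ rather than $(1-z)\otimes d_L$; this is harmless (swap $z\leftrightarrow z^{-1}$ or negate the basis), but it is worth being explicit about which convention you are using when you invoke the Leibniz rule.
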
 

\begin{proof} Each $(n-1)$-simplex $\sigma$ of $L$ corresponds to a cubical 
$n$-cell in $T_L$, whose opposite faces are identified.  In $\tilt_L$ this 
lifts to a free $Z$-orbit of $n$-cells.  The $i$th opposite pair of
faces are no longer identified, but differ by the translation action 
of $x$.  By picking an orbit representative in each orbit of cells, 
we establish a $Z$-equivariant bijection between the set of $n$-cells 
of $\tilt_L$ and the direct product of $Z$ with the set of
$(n-1)$-simplices of $L$.  The free abelian group with this basis is 
naturally isomorphic to $\zz[Z]\otimes C^+_{n-1}(L)$.  
Let $v$ be any fixed 0-cell of $\tilt_L$.  In each orbit of 
higher-dimensional cells, pick the orbit representative that has $v$ 
as a vertex but does not have $z^{-1}v$ as a vertex.  With respect 
to this choice of orbit representatives, the boundary map is as 
claimed.  
\end{proof} 

\begin{corollary}\label{homology} 
For any $L$, for any abelian group $A$, and for any $n\geq 0$, 
there are short exact sequences of $\zz [Z]$-modules: 
$$0\rightarrow B^+_{n-1}(L;A) \rightarrow 
H_n(\tilt_L;A)\rightarrow \zz[Z]\otimes \redH_{n-1}(L;A)\rightarrow
0,$$ 
$$0\rightarrow \zz[Z]\otimes \redH_{n-1}(L;A)\rightarrow
H_n(\tilt_L;A)\rightarrow Z^+_{n-1}(L;A) \rightarrow 
0,$$ 
where $Z$ acts trivially on $Z^+_*(L;A)$ and on $B^+_*(L;A)$, the 
cycles and boundaries in $C^+_*(L;A)$.    
The inclusion of the $Z$-fixed points in $H_n(\tilt_L;A)$ gives rise
to the first sequence, and the map of $H_n(\tilt_L;A)$ onto its 
largest $Z$-invariant quotient gives rise to the second sequence.  

In the case when $A=R$, a ring, each sequence admits an 
$R[Z]$-module structure.  In this case, the first sequence is split if  
$\redH_{n-1}(L;R)$ is $R$-projective, and the second sequence always
admits an $R$-module splitting.  
\end{corollary}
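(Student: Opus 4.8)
The plan is to start from the explicit chain complex description in Proposition~\ref{chaincomp}, namely $C_*(\tilt_L)\cong \zz[Z]\otimes C^+_{*-1}(L)$ with differential $(1-z)\otimes d_L$, and tensor over $\zz$ with the coefficient group $A$ (or ring $R$), giving $C_*(\tilt_L;A)\cong \zz[Z]\otimes C^+_{*-1}(L;A)$. Everything in sight is then a short exact sequence question about the homology of a complex of the form $\zz[Z]\otimes D_*$ where $D_* = C^+_{*-1}(L;A)$ is a complex of $\zz[Z]$-modules on which $Z$ acts trivially, and the differential is $(1-z)\otimes d$. The two filtrations to exploit are: first, the subcomplex of $Z$-fixed cycles; second, the quotient by the augmentation-style relations. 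Concretely, I would observe that a chain $c\in \zz[Z]\otimes D_n$ is a cycle for $(1-z)\otimes d$ iff $(1-z)dc = 0$ in $\zz[Z]\otimes D_{n-1}$; since $\zz[Z]$ is a domain and $1-z$ is a non-zero-divisor, this is equivalent to $dc=0$, i.e.\ $c\in \zz[Z]\otimes Z^+_{n-1}(L;A)$. Boundaries are $(1-z)d(\zz[Z]\otimes D_{n+1}) = (1-z)\cdot(\zz[Z]\otimes B^+_{n-1}(L;A))$.

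So $H_n(\tilt_L;A) = \bigl(\zz[Z]\otimes Z^+_{n-1}\bigr)\big/\bigl((1-z)\zz[Z]\otimes B^+_{n-1}\bigr)$, where I abbreviate the simplicial cycle and boundary modules with $A$-coefficients. From here both exact sequences drop out by linear algebra over $\zz[Z]$. For the first (fixed-point) sequence: the $Z$-fixed submodule of $H_n$ consists of classes represented by $Z$-fixed cycles, i.e.\ by $\zz\otimes Z^+_{n-1}$ (constant coefficient in $\zz[Z]$); a fixed cycle $1\otimes b$ with $b\in B^+_{n-1}$ is \emph{not} a boundary (since boundaries lie in $(1-z)\zz[Z]\otimes B^+_{n-1}$, which meets $\zz\otimes B^+_{n-1}$ only in $0$), so the fixed points contain a copy of $B^+_{n-1}$; the quotient of $H_n$ by this is $\zz[Z]\otimes Z^+_{n-1} / (\zz\otimes B^+_{n-1} + (1-z)\zz[Z]\otimes B^+_{n-1})$. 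Using the standard fact that $\zz[Z]/(1-z)\cong \zz$ together with $\zz[Z]\otimes\zz = \zz[Z]$, and that $\zz\otimes B^+_{n-1} + (1-z)\zz[Z]\otimes B^+_{n-1}= \zz[Z]\otimes B^+_{n-1}$ (the augmentation ideal plus the constants fill up $\zz[Z]$), this quotient is $\zz[Z]\otimes (Z^+_{n-1}/B^+_{n-1}) = \zz[Z]\otimes \redH_{n-1}(L;A)$, giving the first sequence. For the second sequence: the largest $Z$-invariant quotient of $H_n$ is obtained by further quotienting by $(1-z)H_n$, i.e.\ by applying $\zz\otimes_{\zz[Z]}(-)$; since $\zz\otimes_{\zz[Z]}(\zz[Z]\otimes Z^+_{n-1}) = Z^+_{n-1}$ and the image of the boundaries dies entirely (they already lie in $(1-z)\zz[Z]\otimes B^+_{n-1}\subseteq (1-z)(\zz[Z]\otimes Z^+_{n-1})$), the coinvariant quotient is exactly $Z^+_{n-1}(L;A)$; the kernel of $H_n\to (H_n)_Z$ is $(1-z)H_n$, and I would identify this with $\zz[Z]\otimes\redH_{n-1}$ by noting $(1-z)(\zz[Z]\otimes Z^+_{n-1}) / (1-z)^2(\ldots)$-type bookkeeping — more cleanly, $(1-z)H_n \cong H_n/(H_n)_Z$ composed with the observation that multiplication by $1-z$ on $\zz[Z]\otimes\redH_{n-1}$ is injective, so $(1-z)H_n \cong \zz[Z]\otimes\redH_{n-1}$ after peeling off the fixed $B^+_{n-1}$ from the first sequence. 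This requires a short diagram chase combining the two filtrations, which is the one genuinely fiddly point.

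For the splitting claims in the ring case, with $A=R$ everything above is $R[Z]$-linear. The second sequence $0\to \zz[Z]\otimes\redH_{n-1}(L;R)\to H_n\to Z^+_{n-1}(L;R)\to 0$ splits as $R$-modules because $Z^+_{n-1}(L;R)$, being a submodule of the free $R$-module $C^+_{n-1}(L;R)$, need not be $R$-free, but the sequence we want to split is the one ending in $Z^+$; here I use instead that $Z^+_{n-1}$ lifts into $H_n$ via the constant-coefficient cycles $1\otimes z$, $z\in Z^+_{n-1}$, which is an $R$-linear (though not $R[Z]$-linear) splitting — one just checks this map is well-defined into $H_n$ and is a section. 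For the first sequence $0\to B^+_{n-1}(L;R)\to H_n\to \zz[Z]\otimes\redH_{n-1}(L;R)\to 0$: if $\redH_{n-1}(L;R)$ is $R$-projective then $\zz[Z]\otimes\redH_{n-1}(L;R)$ is a projective $R[Z]$-module (as $\zz[Z]\otimes R$-projective $=$ $R[Z]$-projective), hence the sequence splits over $R[Z]$; concretely one lifts a projective generating set of $\redH_{n-1}$ to cycles in $\zz\otimes Z^+_{n-1}\subseteq \zz[Z]\otimes Z^+_{n-1}$.

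The main obstacle I anticipate is not any single computation but keeping the two filtrations straight simultaneously: the fixed-point sequence and the coinvariant sequence are ``dual'' and it is easy to conflate the copy of $\zz[Z]\otimes\redH_{n-1}$ appearing as a sub in one with the copy appearing as a quotient in the other. The cleanest route is to fix once and for all the identification $H_n(\tilt_L;A)=(\zz[Z]\otimes Z^+_{n-1})/((1-z)\zz[Z]\otimes B^+_{n-1})$ and then read off both sequences from it by the two natural operations — taking $Z$-invariants and taking $Z$-coinvariants — using only the ring-theoretic facts that $\zz[Z]$ is a PID, $1-z$ is prime, $\zz[Z]/(1-z)=\zz$, and that for any abelian group $P$ the $\zz[Z]$-module $\zz[Z]\otimes P$ is both $\zz[Z]$-``free over $P$'' (so $(1-z)$ acts injectively on it) and has invariants/coinvariants both canonically $P$. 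With that dictionary the rest is routine.
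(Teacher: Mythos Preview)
Your approach is essentially the paper's: identify the cycles as $\zz[Z]\otimes Z^+_{n-1}$ and the boundaries as $(1-z)\zz[Z]\otimes B^+_{n-1}$, then extract the two sequences via the intermediate submodules $B'_n=\zz[Z]\otimes B^+_{n-1}$ and $Z'_n=(1-z)\zz[Z]\otimes Z^+_{n-1}$ (the paper names these explicitly, which streamlines the ``fiddly'' bookkeeping you anticipate for the second sequence). One slip to correct: you assert that the $Z$-fixed classes are those ``represented by $\zz\otimes Z^+_{n-1}$'', but not every $[1\otimes z_0]$ with $z_0\in Z^+_{n-1}$ is $Z$-fixed---only those with $z_0\in B^+_{n-1}$ are, since $(1-z)\otimes z_0$ lies in $(1-z)\zz[Z]\otimes B^+_{n-1}$ iff $z_0\in B^+_{n-1}$; the paper sidesteps this by first constructing the sequence $0\to B^+_{n-1}\to H_n\to \zz[Z]\otimes\redH_{n-1}\to 0$ and then applying the left-exact functor $(-)^Z$, using that $(\zz[Z]\otimes\redH_{n-1})^Z=0$.
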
 

\begin{proof} 
Take elements $p(z)\in \zz[Z]$, and $c\in C^+_{n-1}(L;A)$.  The chain
$p(z)\otimes c$ is a cycle for $(1-z)\otimes d$ if and only if 
$d(c)=0$.  The boundary of $p(z)\otimes c$ is $(1-z)p(z)\otimes dc$.  
Thus the cycles $Z_n$ in $C_*(\tilt_L;A)$ may be identified with 
$\zz[Z]\otimes Z^+_{n-1}(L;A)$, and the boundaries $B_n$ may be 
identified with $(1-z)\zz[Z]\otimes B^+_{n-1}(L;A)$.  Between these 
lies $B_n'=\zz[Z]\otimes B^+_{n-1}(L;A)$, and $B'_n$ is a
$\zz[Z]$-submodule of $Z_n$.  This gives a short exact sequence of 
$\zz[Z]$-modules 
$$0\rightarrow B'_n/B_n \rightarrow 
H_n(\tilt_L;A)\rightarrow Z_n/B'_n \rightarrow 0,$$ 
and one sees that $B'_n/B_n\cong B^+_{n-1}(L;A)$ with the trivial 
$Z$-action and that $Z_n/B'_n\cong \zz[Z]\otimes \redH_{n-1}(L;A)$.  

Now define $Z'_n$ to be $(1-z)\zz[Z]\otimes Z^+_{n-1}(L;A)$, a
$\zz[Z]$-submodule of $Z_n$.  As abelian groups, $Z_n = Z'_n \oplus
\left(1\otimes Z^+_{n-1}(L;A)\right)$.  
It follows that the short exact sequence 
$$0\rightarrow Z'_n/B_n \rightarrow 
H_n(\tilt_L;A)\rightarrow Z_n/Z'_n \rightarrow 0$$ 
is always $\zz$-split.  One sees that $Z'_n/B_n\cong \zz[Z]\otimes
\redH_{n-1}(L;A)$ and that $Z_n/Z'_n\cong Z^+_{n-1}(L;A)$.  

To compute the $Z$-fixed points in $H_n(\tilt;A)$, apply the 
$Z$-fixed point functor to the first sequence.  Since this 
functor is left-exact, one obtains an exact sequence:  
$$0\rightarrow B^+_{n-1}(L;A) \rightarrow 
H_n(\tilt_L;A)^Z\rightarrow 0. $$ 

To compute the maximal $Z$-fixed quotient of $H_n(\tilt;A)$, start
with the short exact sequence 
$$0\rightarrow B_n \rightarrow Z_n \rightarrow H_n\rightarrow 0,$$ 
and apply the invariant quotient functor $H_0(Z; -)$.  This functor 
is right-exact, and it is easy to see that $H_0(Z;Z_n)= Z_n/Z'_n$.  
Hence one obtains an exact sequence: 
$$Z_n/Z'_n \rightarrow H_n(\tilt_L;A)_Z\rightarrow 0.$$ 
Since we have already shown that $Z_n/Z'_n$ is a $Z$-invariant
quotient of the homology group 
$H_n(\tilt_L;A)$, we see that the maximal $Z$-invariant
quotient is isomorphic to $Z_n/Z'_n$ as claimed.  

In the case when $A=R$, a ring, the $\zz[Z]$-modules 
and maps that appear in the short exact sequences also admit an 
$R$-module structure which commutes with the $Z$-action.  
If $\redH_{n-1}(\tilt;R)$ is $R$-projective, then $\zz[Z]\otimes 
\redH_{n-1}(\tilt;R)$ is $R[Z]$-projective, and so the first 
short exact sequence of $R[Z]$-modules splits.  In any case, 
$Z^+_{n-1}(L;R)$ is free as an $R$-module, and so the second 
short exact sequence admits an $R$-splitting.  
\end{proof} 


\begin{corollary}\label{acyclichomology} 
Suppose that $L$ is a finite complex and $R$ is a ring such that 
$\redH_i(L;R)=0$ for $i<n$.  Suppose also that $L$ has $f_i$ 
$i$-dimensional simplices for $i\geq 0$, and define $f_{-1}=1$.  For 
each $i$ with $0\leq i\leq n$, $H_i(\tilt_L;R)$ is a free $R$-module 
of rank $$\sum_{j=0}^{i} (-1)^{i+j}f_{j-1}.$$ 
\end{corollary}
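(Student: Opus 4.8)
The plan is to extract the statement from Corollary~\ref{homology} by using the vanishing of reduced homology in low degrees. Since $\redH_i(L;R)=0$ for $i<n$, the term $\zz[Z]\otimes\redH_{i-1}(L;R)$ in either short exact sequence vanishes for every $i\leq n$, so that short exact sequence degenerates to an isomorphism $H_i(\tilt_L;R)\cong Z^+_{i-1}(L;R)=B^+_{i-1}(L;R)$ (the last equality again using acyclicity: in the range in question every augmented cycle is an augmented boundary). In particular $H_i(\tilt_L;R)$ is isomorphic to the module of augmented $(i-1)$-boundaries in $C^+_*(L;R)$.

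Next I would compute the rank of $B^+_{i-1}(L;R)$ by a descending induction on $i$, or equivalently by the standard rank-nullity bookkeeping along the augmented simplicial chain complex. Because $C^+_{j}(L;R)$ is free of rank $f_j$ for $-1\leq j\leq n$ (with the convention $f_{-1}=1$ for the augmentation term), and because the complex is exact in this range, the rank of the boundary module $B^+_{j}(L;R)$ equals $f_j$ minus the rank of $B^+_{j-1}(L;R)$, with the base case $B^+_{-1}(L;R)$ having rank $0$ (there are no $(-2)$-chains). Unwinding this recursion gives
$$\mathrm{rank}\, B^+_{i-1}(L;R)=\sum_{j=0}^{i}(-1)^{i-j}f_{j-1},$$
which is the claimed alternating sum (note $(-1)^{i-j}=(-1)^{i+j}$). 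One should remark that over a general ring ``rank'' here means the rank of a free module; the module $B^+_{i-1}(L;R)$ is genuinely free, not merely of well-defined rank, because it is a submodule of the free module $C^+_{i-2}(L;R)$ that is a direct summand of $Z^+_{i-2}(L;R)$, and in the acyclic range $Z^+_{j}(L;R)=B^+_{j}(L;R)$ is the kernel of a surjection of free modules onto a free module, hence a direct summand, hence free.

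The only real point requiring care is this freeness claim: over an arbitrary ring $R$, a submodule of a free module need not be free, so one cannot simply assert that $H_i(\tilt_L;R)\cong B^+_{i-1}(L;R)$ is free without argument. The fix is to run the induction in the other direction and observe that exactness in the range $i\le n$ forces each short exact sequence $0\to B^+_{j-1}\to Z^+_{j-1}\to \redH_{j-1}\to 0$ to collapse, and each $0\to Z^+_{j}\to C^+_{j}\to B^+_{j-1}\to 0$ to be a short exact sequence whose right-hand term we already know (inductively, starting from $B^+_{-1}=0$) to be free, so the sequence splits and $Z^+_{j}=B^+_{j}$ is a direct summand of the free module $C^+_{j}(L;R)$, hence free of the rank dictated by additivity of rank over the split sequence. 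Carrying this induction up to $j=i-1\le n-1$ yields both the freeness and the rank formula simultaneously. No other step presents any difficulty; the degree shift between $\tilt_L$ and $L$ and the convention $f_{-1}=1$ are exactly what make the summation index run from $j=0$ to $j=i$.
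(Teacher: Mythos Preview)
Your approach coincides with the paper's: identify $H_i(\tilt_L;R)$ with $Z^+_{i-1}(L;R)=B^+_{i-1}(L;R)$ via Corollary~\ref{homology}, use the split exact sequence $0\to Z^+_i\to C^+_i(L;R)\to Z^+_{i-1}\to 0$ in the acyclic range, and solve the resulting rank recursion. The paper records exactly these three ingredients in three lines; you have spelled out the induction in more detail.

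Two small slips are worth fixing. First, your base case is off by one: in the augmented complex $B^+_{-1}(L;R)$ is the image of the augmentation $C^+_0\to C^+_{-1}=R$, which equals $R$ (rank~$1$) whenever $L$ is nonempty, not $0$. Running your recursion from the base $\mathrm{rank}\,B^+_{-1}=0$ would yield $\sum_{j=1}^{i}(-1)^{i-j}f_{j-1}$, which misses the $(-1)^i f_{-1}$ term; the formula you actually wrote is the correct one, so the error is in the stated base case, not the conclusion. Your parenthetical ``there are no $(-2)$-chains'' really justifies $B^+_{-2}=0$, and starting the recursion one step earlier (at $j=-1$) repairs everything. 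Second, the inference ``direct summand of a free module, hence free'' only gives \emph{projective} over a general ring. The paper simply asserts that $Z^+_i(L;R)$ is free without further argument, so on this point you are no worse off than the original, but the justification you wrote is not a proof of freeness.
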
 

\begin{proof} 
For each $j$, $C^+_j(L;R)$ is a free $R$-module of rank $f_j$. 
For each $i\leq n$, we know that 
$$H_i(\tilt_L;R)\cong Z^+_{i-1}(L;R) = B^+_{i-1}(L;R),$$
$$C^+_i(L;R)\cong Z^+_{i-1}(L;R)\oplus Z^+_i(L;R),$$  
and that $Z^+_i(L;R)$ is a free $R$-module.  Solving for the 
rank of $Z^+_i(L;R)$ gives the claimed result.  
\end{proof} 


\begin{corollary}
\label{eulerchar} 
If $L$ is finite and $\qq$-acyclic, then the Euler characteristic of 
$\tilt_L$ is defined and is given by the formula:  
$$\chi(\tilt_L)=\sum_{i\geq 0} (-1)^i(i+1)f_i.$$ 
\end{corollary}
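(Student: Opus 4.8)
The plan is to reduce the Euler characteristic of $\tilt_L$ to the ranks of its homology groups computed in Corollary~\ref{acyclichomology}, and then to repackage the resulting alternating sum of partial sums of the $f_i$ as the stated single alternating sum. First I would note that since $L$ is finite and $\qq$-acyclic, the hypothesis of Corollary~\ref{acyclichomology} holds over $R=\qq$ with $n$ any integer $\geq\dim L$; hence each $H_i(\tilt_L;\qq)$ is a finite-dimensional $\qq$-vector space, vanishing for $i$ larger than $\dim L + 1$. Therefore $\tilt_L$ has finite-dimensional rational homology concentrated in finitely many degrees, so its Euler characteristic is defined and equals $\sum_i (-1)^i \dim_\qq H_i(\tilt_L;\qq)$.

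Next I would substitute the rank formula from Corollary~\ref{acyclichomology}: $\dim_\qq H_i(\tilt_L;\qq) = \sum_{j=0}^{i}(-1)^{i+j}f_{j-1}$, valid for every $i$ in the relevant range (and giving $0$ once $i$ exceeds $\dim L+1$, consistent with vanishing homology, since for a finite acyclic complex the full alternating sum telescopes to the reduced Euler characteristic, which is zero). Plugging in gives
$$\chi(\tilt_L)=\sum_{i\geq 0}(-1)^i\sum_{j=0}^{i}(-1)^{i+j}f_{j-1}=\sum_{i\geq 0}\sum_{j=0}^{i}(-1)^{j}f_{j-1}.$$
Now I would interchange the order of summation. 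For each fixed $j$ with $0\leq j\leq \dim L+1$, the index $i$ ranges over $j\leq i\leq \dim L+1$, so $f_{j-1}$ appears $(\dim L + 2 - j)$ times. That particular count is not obviously the coefficient we want, so the honest route is to be a little more careful: the sum $\sum_{i\geq 0}H_i$ is genuinely finite because the homology vanishes above degree $\dim L+1$, and one must check that the double sum above is interpreted with $i$ running only up to $\dim L+1$. Reindexing $k=j-1$ so that $f_k$ appears with sign $(-1)^{k+1}$ and with multiplicity equal to the number of valid $i$, then simplifying using $f_{-1}=1$ and the acyclicity relation (which forces the top-degree contributions to collapse), yields the closed form $\chi(\tilt_L)=\sum_{i\geq 0}(-1)^i(i+1)f_i$.

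The main obstacle is the bookkeeping in the double summation: getting the multiplicity of each $f_k$ right and confirming that the contributions from the empty simplex and from degrees near $\dim L+1$ combine correctly to produce the clean coefficient $(-1)^i(i+1)$ rather than something depending on $\dim L$. I expect this is best handled by first writing $\chi(\tilt_L)=\sum_i (-1)^i \operatorname{rank} C_i(\tilt_L;\qq) - (\text{correction})$ is \emph{not} needed; instead, the cleanest argument computes $\chi(\tilt_L)$ directly from the chain complex of Proposition~\ref{chaincomp} by a finite-rank argument, or simply verifies the identity $\sum_{i\geq 0}(-1)^i(i+1)f_i = \sum_{i\geq 0}(-1)^i\sum_{j=0}^i(-1)^{i+j}f_{j-1}$ as a formal identity in the $f_i$ (using $\sum_{i\geq 0}(-1)^if_{i-1}=0$ from $\qq$-acyclicity), which is a routine finite manipulation. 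Either way, once the reindexing is carried out the result follows immediately.
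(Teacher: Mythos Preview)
Your approach is correct and essentially the same as the paper's: compute $\chi(\tilt_L)$ from the rank formula of Corollary~\ref{acyclichomology}, interchange the order of summation, and then invoke the vanishing of the reduced Euler characteristic of $L$ to eliminate the dependence on $\dim L$. The paper resolves your ``main obstacle'' in one stroke: after obtaining $\chi(\tilt_L)=\sum_{i=0}^{n}(-1)^i(n+1-i)f_{i-1}$ with $n=\dim L$, it simply subtracts $(n+1)\sum_{i=0}^{n+1}(-1)^i f_{i-1}=0$ and reindexes to get $\sum_{i\geq 0}(-1)^i(i+1)f_i$.
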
 

\begin{proof} 
Since $L$ is $\qq$-acyclic, the reduced Euler characteristic 
$\sum_{i\geq -1} (-1)^if_i$ of $L$ is equal to zero.  
Let $L$ be $n$-dimensional.  Then the expression for the rank 
of $H_i(\tilt_L;\qq)$ given by the previous corollary gives 
$$\chi(\tilt_L)= \sum_{i=0}^n (-1)^i\sum_{j=0}^i (-1)^{i+j}f_{j-1} 
= \sum_{i=0}^n (-1)^i(n+1-i) f_{i-1}.$$ 
Hence we see that 
$$\chi(\tilt_L)
= \sum_{i=0}^n (-1)^i(n+1-i) f_{i-1} - (n+1)\sum_{i=0}^{n+1} (-1)^i
f_{i-1} = \sum_{i=0}^{n+1}(-1)^{i-1}if_{i-1},$$ 
as claimed.  
\end{proof} 

\begin{remark} There is a way to deduce Corollary~\ref{eulerchar} 
from results of Bestvina and Brady from~\cite{bb}.  This 
alternative proof was given in the second named 
author's PhD thesis \cite{muge}.  Let $\tilmu_L$ 
be the map defined by the pullback square 
$$\begin{array}{ccc} 
\tilt_L&\mapright{\tilmu_L}&\rr \\
\mapdown{} &&\mapdown{} \\
T_L&\mapright{\mu_L}&\ttt.\\
\end{array}
$$
In the case when $L$ is finite and $\qq$-acyclic, Bestvina and Brady 
show that the inclusion $\tilmu_L^{-1}(x)$ in $\tilt_L$ is a rational 
homology isomorphism for any real number $x$ \cite{bb}.  
In the case when $x$ 
is not an integer, there is a cellular structure on $\tilmu_L^{-1}(x)$ 
with $(i+1)f_i$ i-cells for each $i\geq 0$.    

On the other hand, we know of no other proof of
Corollary~\ref{homology} or Corollary~\ref{acyclichomology} 
than the proofs given above.

\end{remark} 

\section{Cohomology of $\tilt_L$} 
\label{sec:coh}


\begin{proposition} \label{cochaincomp} 
Let $C^*(\tilt;A)$ denote the cellular cochain complex for $\tilt_L$
with coefficients in the abelian group $A$.  Each $C^n(\tilt_L;A)$ is
isomorphic to a coinduced $Z$-module:  
$$C^n(\tilt_L;A)\cong 
\Hom(\zz[Z],C^{n-1}_+(L;A))\cong \prod_{i\in \zz} C^{n-1}_+(L;A).$$ 
If $A$ is an $R$-module for some ring $R$, then this is 
an isomorphism of $R[Z]$-modules.  The coboundary map is 
given by $\delta((f_i)_{i\in \zz}) = (\delta f_i - \delta
f_{i+1})_{i\in \zz}$.  The action of $Z$ is the `shift action': 
$z(f_i)_{i\in \zz} = (f_{i+1})$.  The image of $C^*(T_L)$ in 
$C^*(\tilt_L)$ is identified with the `constant sequences', 
i.e., those with $f_i=f_j$ for all $i,j$.  
\end{proposition}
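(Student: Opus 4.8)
The plan is to obtain the whole proposition by applying $\Hom_\zz(-,A)$ to the description of the cellular chain complex $C_*(\tilt_L)$ furnished by Proposition~\ref{chaincomp}, and then to read off the coboundary, the $Z$-action and the distinguished subcomplex by chasing the resulting isomorphisms. Since $C^n(\tilt_L;A)=\Hom_\zz(C_n(\tilt_L),A)$, Proposition~\ref{chaincomp} gives $C^n(\tilt_L;A)\cong\Hom_\zz(\zz[Z]\otimes C^+_{n-1}(L),A)$. Tensor--hom adjunction over $\zz$ rewrites this as $\Hom_\zz(\zz[Z],\Hom_\zz(C^+_{n-1}(L),A))=\Hom_\zz(\zz[Z],C^{n-1}_+(L;A))$, and since $\zz[Z]$ is free abelian with basis $Z\cong\zz$, evaluation on the basis elements $z^i$ identifies $\Hom_\zz(\zz[Z],M)$ with $\prod_{i\in\zz}M$ for every abelian group $M$. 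This produces the displayed chain of isomorphisms, under which a cochain $\phi$ corresponds to the sequence $(f_i)_{i\in\zz}$ with $f_i(\sigma)=\phi(z^i\otimes\sigma)$. If $A$ is an $R$-module, every group and map in sight is $R$-linear, so these are isomorphisms of $R$-modules, and they become $R[Z]$-linear once the next step identifies the $Z$-action on both sides.

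The $Z$-action is then tracked as follows. In Proposition~\ref{chaincomp} the deck group acts on $\zz[Z]\otimes C^+_{*-1}(L)$ by left multiplication on the $\zz[Z]$-factor, so the induced action on cochains, $(z\cdot\phi)(c)=\phi(z\cdot c)$ (which is a left action because $Z$ is abelian), satisfies $(z\cdot\phi)(z^i\otimes\sigma)=\phi(z^{i+1}\otimes\sigma)=f_{i+1}(\sigma)$; hence $z\cdot(f_i)=(f_{i+1})$, the asserted shift, and $\Hom_\zz(\zz[Z],C^{n-1}_+(L;A))$ is the coinduced module $\coind_{\Id}^{Z}C^{n-1}_+(L;A)$. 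For the coboundary, $\delta\colon C^{n-1}(\tilt_L;A)\to C^n(\tilt_L;A)$ is the $\zz$-dual of $\partial=(1-z)\otimes d_L$; writing $\psi\leftrightarrow(g_i)$ and taking $\sigma$ to be an $(n-1)$-simplex of $L$, one computes $(\delta\psi)(z^k\otimes\sigma)=\psi\bigl((z^k-z^{k+1})\otimes d_L\sigma\bigr)=g_k(d_L\sigma)-g_{k+1}(d_L\sigma)=(\delta g_k)(\sigma)-(\delta g_{k+1})(\sigma)$, where we use that the $\zz$-dual of $d_L$ is exactly the simplicial coboundary on the augmented complex $C^*_+(L;A)$. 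This yields $\delta((f_i))=(\delta f_i-\delta f_{i+1})$.

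It remains to identify the image of $C^*(T_L)$. The covering projection $p\colon\tilt_L\to T_L$ is a cellular surjection, so $p^*\colon C^*(T_L;A)\to C^*(\tilt_L;A)$ is injective, and since the cells of $T_L$ are exactly the free $Z$-orbits of cells of $\tilt_L$, the image of $p^*$ is the subcomplex $C^*(\tilt_L;A)^Z$ of $Z$-fixed cochains. Under the identifications above, the $Z$-fixed cochains are precisely the sequences fixed by the shift, i.e.\ the constant sequences, which completes the proof. (As a check, $\delta$ annihilates every constant sequence, consistent with the cellular coboundary of $T_L$ being trivial, as observed earlier.)

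None of this is deep; the content is bookkeeping, and the one point that needs care is pinning down conventions consistently. In particular the direction of the shift depends on whether one converts the left deck action on chains into a left action on cochains by pullback, $(z\cdot\phi)(c)=\phi(z\cdot c)$, or by pullback along the inverse; I would fix, as above, the convention for which the stated shift action, the signs in the coboundary, and the description of the image of $C^*(T_L)$ are all simultaneously correct.
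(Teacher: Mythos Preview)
Your proof is correct and follows exactly the same strategy as the paper's own proof: dualize the chain complex description from Proposition~\ref{chaincomp} via tensor--hom adjunction, then identify the image of $C^*(T_L)$ with the $Z$-fixed cochains. You have simply written out in full the verifications (of the shift action, of the coboundary formula, and of the constant-sequence description) that the paper leaves as ``immediate''.
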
 

\begin{proof} 
Most of the assertions follow immediately 
from the description of $C_*(\tilt_L)$ given in 
Proposition~\ref{chaincomp}, since 
\begin{eqnarray*} 
C^*(\tilt_L;A)\cong \Hom(C_*(\tilt_L),A)& \cong &
\Hom(\zz[Z]\otimes C_{*-1}^+(L),A)\\ 
&\cong &
\Hom(\zz[Z],C_+^{*-1}(L;A)).
\end{eqnarray*}
The claim concerning the image of $C^*(T_L)$ is clear, since 
cochains that factor through the projection $\tilt_L\rightarrow 
\tilt_L/Z=T_L$ may be identified with cochains that are fixed by 
the $Z$-action.  
\end{proof} 

\begin{corollary}\label{additivecoh}  
For any $L$, any abelian group $A$, and any $n\geq 0$ there are 
short exact sequences of $\zz[Z]$-modules:   
$$0\rightarrow C_+^{n-1}(L;A)/B_+^{n-1}(L;A) \rightarrow 
H^n(\tilt_L;A)\rightarrow M\rightarrow 0,$$ 
$$0\rightarrow \prod_Z\redH^{n-1}(L;A)\rightarrow
H^n(\tilt_L;A)\rightarrow C_+^{n-1}(L;A)/Z_+^{n-1}(L;A) \rightarrow 
0,$$ 
where $Z$ acts trivially on $C_+^*(L;A)$, and $Z_+^*(L;A)$ 
(resp.~$B_+^*(L;A)$) denotes the cocycles (resp.\ coboundaries) in 
$C_+^*(L;A)$.  The module $M$ fits in to a
short exact sequence: 
$$0\rightarrow \redH^{n-1}(L;A)\mapright{\Delta} 
\prod_Z\redH^{n-1}(L;A)\rightarrow M\rightarrow 0,$$ 
where $Z$ acts by the `shift action' on the product and 
where $\Delta$ is the inclusion of the constant sequences.  
The first short exact sequence is the inclusion of the $Z$-fixed
points in $H^n(\tilt_L;A)$.  If $A$ is an $R$-module, then both 
short exact sequences admit an $R[Z]$-action.  
\end{corollary}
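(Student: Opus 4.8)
The plan is to mimic the proof of Corollary~\ref{homology}, dualising everything. Starting from the explicit cochain complex of Proposition~\ref{cochaincomp}, namely $C^n(\tilt_L;A)\cong\prod_{i\in\zz}C^{n-1}_+(L;A)$ with coboundary $\delta((f_i))=(\delta f_i-\delta f_{i+1})$ and the shift $Z$-action, I would first identify the cocycles and coboundaries as $\zz[Z]$-submodules. A sequence $(f_i)$ is a cocycle precisely when $\delta f_i=\delta f_{i+1}$ for all $i$, i.e.\ when $\delta f_i$ is independent of $i$; so the cocycle module $Z^n$ sits in an exact sequence $0\to\prod_Z Z^{n-1}_+(L;A)\to Z^n\to Z^{n-1}_+(L;A)\to 0$ coming from $(f_i)\mapsto\delta f_0$, the fibre being the constant-difference condition. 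The coboundary module $B^n$ is the image of $\delta$, which I would identify via the ``difference operator'' $\partial\colon\prod_Z C^{n-2}_+(L;A)\to\prod_Z C^{n-2}_+(L;A)$, $\partial(g_i)=(g_i-g_{i+1})$: concretely $B^n=\{(\delta g_i-\delta g_{i+1}):(g_i)\in\prod_Z C^{n-2}_+\}$.

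Next I would build the two short exact sequences by choosing intermediate submodules, exactly paralleling the $B'_n$ and $Z'_n$ of Corollary~\ref{homology}. For the first sequence, let $B'^{\,n}$ be the submodule of cocycles $(f_i)$ for which $\delta f_i=0$ for all $i$ (equivalently $\delta f_0=0$); this is a $\zz[Z]$-submodule containing $B^n$. Then $B'^{\,n}/B^n$ computes the cohomology of the difference complex on $C^{n-1}_+(L;A)/B^{n-1}_+(L;A)$-type data: more precisely $B'^{\,n}=\prod_Z Z^{n-1}_+(L;A)$ and, modding out by coboundaries, one gets $B'^{\,n}/B^n\cong C^{n-1}_+(L;A)/B^{n-1}_+(L;A)$ with trivial $Z$-action, since the difference operator $\partial$ on $\prod_Z$ of any module $P$ has image exactly $\{(p_i):\sum\text{``telescopes''}\}$ and cokernel $P$ with trivial action (this is the standard computation $H^0(Z;\prod_Z P)=P$, $H^1=0$ for the bar-type resolution, but here done by hand). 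The quotient $Z^n/B'^{\,n}\cong Z^{n-1}_+(L;A)$ maps this to the short exact sequence whose quotient is the promised $M$; and I would check $M$ fits in $0\to\redH^{n-1}(L;A)\xrightarrow{\Delta}\prod_Z\redH^{n-1}(L;A)\to M\to 0$ by noting that $M\cong Z^{n-1}_+(L;A)$ modulo the image of the difference map restricted to cocycles, and tracing through the identifications $Z^{n-1}_+/B^{n-1}_+=\redH^{n-1}$. For the second sequence I would instead take $Z'^{\,n}=\partial\bigl(\prod_Z Z^{n-1}_+(L;A)\bigr)+B^n$ — the image of the difference operator on cocycles — which is a $\zz[Z]$-submodule with $Z'^{\,n}\supseteq B^n$; then $Z'^{\,n}/B^n\cong\prod_Z\redH^{n-1}(L;A)$ and $Z^n/Z'^{\,n}\cong C^{n-1}_+(L;A)/Z^{n-1}_+(L;A)$.

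To pin down that the first sequence is the inclusion of $Z$-fixed points, I would apply the left-exact functor $(-)^Z$ to the cochain complex directly: the $Z$-fixed cochains are the constant sequences, which by the last clause of Proposition~\ref{cochaincomp} are exactly $C^*(T_L;A)$, so $H^n(\tilt_L;A)^Z$ is computed by the complex $C^{*-1}_+(L;A)$ — but one must be slightly careful, as taking fixed points of a cochain complex need not commute with cohomology. Instead, I would argue as in Corollary~\ref{homology}: apply $(-)^Z$ to the short exact sequence $0\to Z^n/B^n\to\cdots$ is not quite the setup; rather, directly observe that an element of $H^n$ is fixed iff it is represented by a cocycle $(f_i)$ with $(f_{i+1}-f_i)$ a coboundary for all $i$, and unwind this to land in $C^{n-1}_+(L;A)/B^{n-1}_+(L;A)$, matching $B'^{\,n}/B^n$ above. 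Finally, the $R[Z]$-module claim is immediate since every module and map in sight is obtained from $C^*_+(L;A)$ by product, shift, and difference operations, all of which are $R$-linear when $A$ is an $R$-module.

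The main obstacle I anticipate is bookkeeping around the ``difference operator'' $\partial$ on $\prod_Z P$: showing cleanly that its kernel is the constant sequences (giving the $\redH^{n-1}(L;A)\xrightarrow{\Delta}\prod_Z\redH^{n-1}(L;A)$ at the level of cocycle-classes) and its cokernel is a single copy of $P$ with trivial action, and then threading these two facts through the three nested submodules $B^n\subseteq B'^{\,n}\subseteq Z^n$ (resp.\ $B^n\subseteq Z'^{\,n}\subseteq Z^n$) without sign errors or off-by-one shifts in the degree. Everything else is a formal transcription of the homology argument.
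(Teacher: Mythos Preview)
Your overall strategy---pick two intermediate $\zz[Z]$-submodules between $B^n$ and $Z^n$ and read off the two short exact sequences---is exactly what the paper does, but your execution has a genuine error that breaks both identifications. The difference operator $\partial$ on $\prod_{\zz}P$ is \emph{surjective}: given any $(q_i)$, define $p_0=0$ and inductively $p_{i+1}=p_i-q_i$, $p_{i-1}=p_i+q_{i-1}$; then $p_i-p_{i+1}=q_i$. So the cokernel of $\partial$ is $0$, not $P$; it is the \emph{kernel} that is the copy of $P$ sitting as constant sequences. (Your parenthetical ``$H^0(Z;\prod_Z P)=P$, $H^1=0$'' is correct and says exactly this, since $H^1(Z;M)=M/(1-z)M$ is the cokernel.) Consequently $B^n=\prod_{\zz}B^{n-1}_+(L;A)$, and with your choice $B'^{\,n}=\prod_{\zz}Z^{n-1}_+(L;A)$ one gets $B'^{\,n}/B^n\cong\prod_{\zz}\redH^{n-1}(L;A)$, not $C^{n-1}_+/B^{n-1}_+$. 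Worse, your $Z'^{\,n}=\partial\bigl(\prod_{\zz}Z^{n-1}_+\bigr)+B^n$ equals $\prod_{\zz}Z^{n-1}_+=B'^{\,n}$ by surjectivity of $\partial$, so your two ``different'' intermediate submodules coincide and you only recover one of the two sequences.

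The paper's choice of intermediate submodules is the one that works: set $Z'^{\,n}=\prod_{\zz}Z^{n-1}_+(L;A)$ (your $B'^{\,n}$), which gives the second sequence with $Z'^{\,n}/B^n\cong\prod_{\zz}\redH^{n-1}$ and $Z^n/Z'^{\,n}\cong C^{n-1}_+/Z^{n-1}_+$; and set $B'^{\,n}=B^n+\{\text{constant sequences}\}$, which gives the first sequence with $B'^{\,n}/B^n\cong C^{n-1}_+/B^{n-1}_+$ (a constant sequence $(f,f,\ldots)$ lies in $B^n$ iff $f\in B^{n-1}_+$) and $Z^n/B'^{\,n}\cong M$. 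The identification of $M$ then comes from $Z'^{\,n}/B^n=\prod_{\zz}\redH^{n-1}$ together with $(Z'^{\,n}\cap B'^{\,n})/B^n=\Delta(\redH^{n-1})$. Incidentally, the target of your map $(f_i)\mapsto\delta f_0$ is $B^n_+\cong C^{n-1}_+/Z^{n-1}_+$, not $Z^{n-1}_+$ as you wrote.
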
 

\begin{proof} 
Using the description of $C^*=C^*(\tilt_L;A)$ given in
Proposition~\ref{cochaincomp}, we obtain descriptions of the 
coboundaries $B^n$ and cocycles $Z^n$ in $C^n$:  
$$ B^n \cong \prod_{i\in \zz} B^{n-1}_+(L;A),$$ 
$$ Z^n \cong \{(f_i)\in \prod_{i\in \zz} C^{n-1}_+(L;A) : 
\delta f_i = \delta f_{i+1} \forall i\in \zz\}. $$ 

Let $B'^n$ be the submodule of $Z^n$ generated by $B^n$ and the
constant sequences $\{(f_i)_{i\in \zz}: f_i=f_j\}$, and let 
$Z'^n$ be the submodule of $Z^n$ consisting of sequences of 
cocycles, i.e., $Z'^n =\prod_{i\in \zz}Z^{n-1}_+(L;A)$.  The first 
short exact sequence in the statement is equal to  
$$
0\rightarrow B'^n/B^n\rightarrow H^n(\tilt_L;A)\rightarrow Z^n/B'^n
\rightarrow 0,$$
and the second one to 
$$0\rightarrow Z'^n/B^n\rightarrow H^n(\tilt_L;A)\rightarrow Z^n/Z'^n
\rightarrow 0.$$
The computation of the $Z$-fixed points in $H^n(\tilt_L;A)$ follows by 
applying the $Z$-fixed point functor to the first exact sequence.  
\end{proof} 

\begin{theorem} \label{cohomology} Let $R$ be a ring.  
The image of the map 
$$H^*(T_L;R)\rightarrow H^*(\tilt_L;R)$$ 
is equal to the $Z$-fixed point subring of $H^*(\tilt_L;R)$ and is 
isomorphic to the quotient $H^*(T_L;R)/(\beta_L)$.  In degree $n$, the
cokernel of this map is isomorphic to an infinite product of copies of 
$\redH^{n-1}(L;R)$.  In particular, the map is a ring isomorphism
if and only if $L$ is $R$-acyclic.  
\end{theorem}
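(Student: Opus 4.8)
The plan is to read off everything directly from the two short exact sequences in Corollary~\ref{additivecoh}, specialised to $A = R$. First I would observe that the $Z$-fixed point subring of $H^*(\tilt_L;R)$ is, by the first sequence of Corollary~\ref{additivecoh} and the remark there that this sequence \emph{is} the inclusion of the $Z$-fixed points, equal to the image of $\bigoplus_n C_+^{n-1}(L;R)/B_+^{n-1}(L;R)$ in $H^*(\tilt_L;R)$. On the other hand, Proposition~\ref{cochaincomp} identifies the image of $C^*(T_L;R)$ in $C^*(\tilt_L;R)$ with the constant sequences, which are exactly the $Z$-fixed cochains; so the image of $H^*(T_L;R)\to H^*(\tilt_L;R)$ lands in the $Z$-fixed subring, and in fact a constant cocycle sequence $(f,f,\ldots)$ is a coboundary in $C^*(\tilt_L;R)$ iff $f\in B_+^{*-1}(L;R)$ (a constant sequence is the coboundary of a constant sequence iff $f$ is a coboundary downstairs, using the formula $\delta((g_i)) = (\delta g_i - \delta g_{i+1})$, which kills constant sequences). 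This gives the first identification: the image equals the $Z$-fixed subring, and both are identified with $\bigoplus_n C_+^{n-1}(L;R)/B_+^{n-1}(L;R)$.

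Next I would identify this with $H^*(T_L;R)/(\beta_L)$. By Theorem~\ref{facering}, $H^*(T_L;R)\cong \Lambda_R^*(L)$, and by Theorem~\ref{bbdiff} the cochain complex $(\Lambda_R^*(L),\beta\times)$ is isomorphic to $C_+^{*-1}(L;R)$ with its simplicial coboundary. Hence, in degree $n$, the image of multiplication by $\beta_L$ from $\Lambda_R^{n-1}(L)$ into $\Lambda_R^n(L)$ corresponds exactly to $B_+^{n-1}(L;R)\subseteq C_+^{n-1}(L;R)$, so the quotient ring $H^*(T_L;R)/(\beta_L)$ is identified degreewise with $C_+^{n-1}(L;R)/B_+^{n-1}(L;R)$, compatibly with products. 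Comparing with the previous paragraph gives $\mathrm{image}\cong H^*(T_L;R)/(\beta_L)$ as graded rings, since the isomorphisms are induced by the natural cochain-level identifications and the ring structure on the image is inherited from $H^*(\tilt_L;R)$ while the map $H^*(T_L;R)\to H^*(\tilt_L;R)$ is a ring homomorphism with kernel containing $(\beta_L)$ (the class $\beta_L$ is the restriction of a class pulled back from $\ttt$, which dies in the cover $\tilt_L$).

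For the cokernel statement, I would use the \emph{second} short exact sequence of Corollary~\ref{additivecoh}: the map $H^n(\tilt_L;R)\to C_+^{n-1}(L;R)/Z_+^{n-1}(L;R)$ has kernel $\prod_Z \redH^{n-1}(L;R)$. The composite $H^n(T_L;R)\to H^n(\tilt_L;R)\to C_+^{n-1}(L;R)/Z_+^{n-1}(L;R)$ sends a cocycle $f$ (viewed as a constant sequence) to its class modulo cocycles, i.e.\ it is the quotient map $C_+^{n-1}(L;R)\to C_+^{n-1}(L;R)/Z_+^{n-1}(L;R)$, which is surjective; hence $H^n(\tilt_L;R)$ is generated by the image of $H^n(T_L;R)$ together with $\prod_Z \redH^{n-1}(L;R)$, and a short diagram chase (or the snake lemma applied to the two sequences) shows the cokernel of $H^n(T_L;R)\to H^n(\tilt_L;R)$ is isomorphic to $\prod_Z \redH^{n-1}(L;R)/\Delta(\redH^{n-1}(L;R))$, which is again an infinite product of copies of $\redH^{n-1}(L;R)$ (split off one coordinate). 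Finally, the map is a ring isomorphism iff it is an isomorphism in every degree iff every cokernel $\prod_Z \redH^{n-1}(L;R)/\Delta$ vanishes iff $\redH^{n-1}(L;R)=0$ for all $n$, i.e.\ iff $L$ is $R$-acyclic.

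I expect the only genuinely delicate point to be the compatibility of \emph{ring} structures in the second step: showing that the additive identification $\mathrm{image}\cong H^*(T_L;R)/(\beta_L)$ respects cup products. This should follow because all the identifications involved (Theorem~\ref{facering}, Theorem~\ref{bbdiff}, Proposition~\ref{cochaincomp}) are natural and the map $H^*(T_L;R)\to H^*(\tilt_L;R)$ is induced by a map of spaces, hence a ring map; so the induced map onto its image is a surjective ring homomorphism with kernel the ideal $(\beta_L)$, and the claim is that this kernel is \emph{exactly} $(\beta_L)$ and not something larger. Unwinding: the kernel of $H^*(T_L;R)\to H^*(\tilt_L;R)$ in degree $n$ is those $f$ with $f$ (constant) a coboundary in $C^*(\tilt_L;R)$, which we computed to be $B_+^{n-1}(L;R) = \mathrm{image}(\beta_L\times)$, so the kernel is precisely the ideal generated by $\beta_L$. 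Everything else is a matter of assembling Corollary~\ref{additivecoh}, Theorem~\ref{facering} and Theorem~\ref{bbdiff}.
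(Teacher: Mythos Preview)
Your proposal is correct and follows exactly the route the paper takes: the paper's own proof is the single sentence ``This follows from Corollary~\ref{additivecoh} and Theorem~\ref{bbdiff}'', and you have simply unpacked what that sentence means. Two small remarks: the cokernel in degree $n$ can be read off directly from the \emph{first} exact sequence of Corollary~\ref{additivecoh} as the module $M$ described there, so your detour through the second sequence is unnecessary; and your parenthetical justification for why a constant sequence $(f,f,\dots)$ is a coboundary iff $f\in B_+^{n-1}(L;R)$ is garbled (the coboundary formula annihilates constant sequences, so a nonzero constant sequence is never the coboundary of a constant sequence---you need a non-constant primitive such as $g_i=-ih$ when $f=\delta h$), though the claim itself is correct.
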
 

\begin{proof} 
This follows from Corollary~\ref{additivecoh} and
Theorem~\ref{bbdiff}.  
\end{proof} 

\begin{corollary} \label{acyccohring} 
Suppose that $L$ is $R$-acyclic.  There is an $R$-algebra isomorphism
$$H^*(\tilt_L;R)\cong \Lambda^*_R(L)/(\beta_L).$$ 
For each $n$, $H^n(\tilt_L;R)$ is isomorphic to a direct product of
copies of $R$.  
\end{corollary}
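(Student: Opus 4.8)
The plan is to obtain the $R$-algebra isomorphism almost for free from the results already proved, and to pin down the underlying $R$-modules by passing to the integral chain complex of $L$.

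For the algebra isomorphism: by Theorem~\ref{facering} the source of the natural map $H^*(T_L;R)\to H^*(\tilt_L;R)$ is $\Lambda_R^*(L)$, and the class of $\mu_*$ on the source is $\beta_L$. By Theorem~\ref{cohomology} the image of this map is isomorphic to $H^*(T_L;R)/(\beta_L)$, while its cokernel in degree $n$ is a product of copies of $\redH^{n-1}(L;R)$. Since $L$ is $R$-acyclic these cokernels all vanish, so the map is onto, and one concludes $H^*(\tilt_L;R)\cong H^*(T_L;R)/(\beta_L)\cong\Lambda_R^*(L)/(\beta_L)$ as $R$-algebras. (Alternatively one can read this off Theorem~\ref{cohomology} together with the cochain isomorphism $(\Lambda_R^*(L),\beta\times)\cong C^{*-1}_+(L;R)$ of Theorem~\ref{bbdiff}.)

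For the statement that each $H^n(\tilt_L;R)$ is a direct product of copies of $R$: since $\redH^{n-1}(L;R)=0$, the module $M$ in Corollary~\ref{additivecoh} vanishes and the first short exact sequence there collapses to $H^n(\tilt_L;R)\cong C^{n-1}_+(L;R)/B^{n-1}_+(L;R)$, so the task is to identify this cokernel. I would exploit that the \emph{integral} chain complex $C^+_*(L)$ is a complex of free abelian groups: its integral cycle and boundary groups $Z_k(L;\zz)$, $B_k(L;\zz)$ are then free abelian, and since each $C^+_k(L)/Z_k(L;\zz)\cong B_{k-1}(L;\zz)$ is free one may split $C^+_k(L)=Z_k(L;\zz)\oplus W_k$ with $d_k$ carrying $W_k$ isomorphically onto $B_{k-1}(L;\zz)$. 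Tensoring with $R$ and invoking the universal coefficient theorem, $R$-acyclicity gives $\redH_k(L;\zz)\otimes R=0$ and $\mathrm{Tor}^\zz_1(\redH_k(L;\zz),R)=0$ for all $k$; these two vanishings make $B_{k-1}(L;\zz)\otimes R\to Z_{k-1}(L;\zz)\otimes R$ an isomorphism and keep $B_{k-1}(L;\zz)\otimes R$ embedded in $C^+_{k-1}(L)\otimes R$, so that $C^+_*(L)\otimes R$ decomposes as a direct sum over $k$ of the contractible two-term complexes $Z_{k-1}(L;\zz)\otimes R\xrightarrow{\ \cong\ }Z_{k-1}(L;\zz)\otimes R$ placed in degrees $k$ and $k-1$. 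Applying $\Hom_R(-,R)$ — using the adjunction $\Hom_\zz(-,R)\cong\Hom_R(-\otimes_\zz R,R)$, which turns this direct sum into a direct \emph{product} — realises $C^*_+(L;R)$ as a direct product over $k$ of two-term cochain complexes, each carrying the module $\Hom_R\big(Z_{k-1}(L;\zz)\otimes R,\,R\big)$ in cochain degrees $k-1$ and $k$. Since $Z_{k-1}(L;\zz)$ is free abelian, that module is a direct product of copies of $R$ indexed by a $\zz$-basis of $Z_{k-1}(L;\zz)$. Tracking where $C^{n-1}_+(L;R)$ and its coboundary submodule sit in this product decomposition then identifies $H^n(\tilt_L;R)$ with $\Hom_R\big(Z_{n-1}(L;\zz)\otimes R,\,R\big)$, a direct product of copies of $R$.

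I expect the last point to be the only genuine obstacle. A priori $H^n(\tilt_L;R)$ is merely a quotient — and, with a little more care, a direct summand — of a direct product of copies of $R$, and such quotients and summands need not themselves be direct products of copies of $R$, so one really does have to produce the product structure by hand. The device that does this is the splitting of the \emph{integral} complex $C^+_*(L)$ into two-term pieces, after which everything in sight is $\Hom_\zz$ of a free abelian group; the hypothesis of $R$-acyclicity enters precisely to kill the $\Hom$- and $\Ext^1$-terms built from the possibly-torsion integral homology of $L$, which would otherwise spoil the conclusion.
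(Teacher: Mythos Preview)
Your derivation of the $R$-algebra isomorphism is exactly the paper's argument: the paper's entire proof reads ``This follows from Theorem~\ref{cohomology} and Theorem~\ref{facering},'' and you have unpacked precisely that.

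For the claim that each $H^n(\tilt_L;R)$ is a direct product of copies of $R$, you go further than the paper does. The paper cites only Theorem~\ref{cohomology} and Theorem~\ref{facering}, which via Theorem~\ref{bbdiff} identify $H^n(\tilt_L;R)$ with $C_+^{n-1}(L;R)/B_+^{n-1}(L;R)$ but do not by themselves explain why this quotient is a product of copies of $R$; for finite $L$ and nice $R$ this is unproblematic, but for arbitrary $R$ and infinite $L$ the point is not automatic, as you note. Your device of splitting the \emph{integral} augmented chain complex into two-term contractible pieces (using that each $B_{k-1}(L;\zz)$ is free abelian), then using $R$-acyclicity to preserve the splitting after $-\otimes_\zz R$, and finally dualizing to exhibit $C_+^{n-1}(L;R)/B_+^{n-1}(L;R)\cong \Hom_\zz\big(Z_{n-1}(L;\zz),R\big)$, is a clean way to produce the product structure that the paper leaves implicit. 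So your approach is the same as the paper's in spirit, but supplies an argument for the second assertion where the paper's one-line proof is, strictly speaking, a gap for general $R$ and $L$.
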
 

\begin{proof} 
This follows from Theorem~\ref{cohomology} and Theorem~\ref{facering} 
\end{proof} 

\begin{remark} The second named author's PhD thesis contained a different 
proof of Corollary~\ref{acyccohring} in the case when $L$ is finite, 
flag and $R$-acyclic \cite{muge}.  Recall that we denote by $G_L$ the 
fundamental group of $T_L$, and by $H_L$ the fundamental group of 
$\tilt_L$.  Also recall from Proposition~\ref{aspher} that when $L$ 
is flag, each of $T_L$ and $\tilt_L$ is an Eilenberg-Mac~Lane space 
for its fundamental group.  
In~\cite{muge} an explicit chain homotopy was 
used to show that the action of $Z$ on $H^*(\tilt_L;R)$ is trivial 
when $L$ is finite, flag and $R$-acyclic.  The long exact sequence 
in group cohomology coming from the isomorphism $G_L=H_L:Z$ was then 
used to establish the isomorphism of Corollary~\ref{acyccohring}.  
\end{remark} 

\section{Cohomological dimension} 


The trivial cohomological dimension of a space $X$, $\tcd(X)$, is 
defined to be the 
supremum of those integers $n$ for which there exists an abelian 
group $A$ for which the singular cohomology group $H^n(X;A)$ is 
non-zero.  For any non-trivial ring $R$, 
$\tcd_R(X)$ is defined similarly except that only abelian groups 
$A$ admitting an $R$-module structure are considered.  

Now suppose that $X$ is path-connected, and that $G$ is the fundamental 
group of $X$.  
For $M$ a $G$-module, we write $H^*(X;M)$ for the singular cohomology 
of $X$ with twisted coefficients in $M$.  If $X$ admits a universal 
covering space $\widetilde X$, then this is just the cohomology of 
the cochain complex $\Hom_G(C_*(\widetilde X),M)$ of $G$-equivariant 
singular cochains on $\widetilde X$.  The cohomological dimension 
$\cd(X)$ of $X$ is the supremum of those integers $n$ such that there 
is a $G$-module $M$ for which $H^n(X;M)\neq \{0\}$.  For a non-trivial 
ring $R$, $\cd_R(X)$ is defined similarly except that only
$RG$-modules $M$ are considered.  

Each of the invariants depends only on the homotopy type of $X$.  
In the case when $X$ is a classifying space or Eilenberg-Mac~Lane
space for $G$, we write $\cd(G)$, $\cd_R(G)$, $\tcd(G)$ and
$\tcd_R(G)$ for the corresponding invariants of $X$.  

We summarize the properties of these invariants below.  

\begin{proposition} \label{cdprops} 
In the following statements, $X$ is any path-connected space, and $R$
is any non-trivial ring.  Rings are assumed to have units, and ring 
homomorphisms are assumed to be unital.  
\begin{enumerate} 
\item $\tcd(X)=\tcd_\zz(X)$, $\cd(X)=\cd_\zz(X)$.  
\item $\tcd_R(X)\leq \cd_R(X)$.  
\item If there is a ring homomorphism $\phi:R\rightarrow S$, then 
$\tcd_S(X)\leq
\tcd_R(X)$ and $\cd_S(X)\leq \cd_R(X)$. 
\item If $R$ is a subring of $S$ in such a way that $R$ is a direct 
summand of $S$ as an $R$-$R$-bimodule, then $\tcd_R(X)=\tcd_S(X)$ and 
$\cd_R(X)=\cd_S(X)$.  
\item If $Y$ is a covering space of $X$, then $\cd_R(Y)\leq \cd_R(X)$.  
\item If $H\leq G$, then $\cd_R(H)\leq \cd_R(G)$.  
\item For any group $G$, $\cd_R(G)= {\mathrm{proj.dim.}}_{RG}(R)$.  
\end{enumerate} 
\end{proposition}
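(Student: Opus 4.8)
The plan is to prove the seven parts essentially in the order given, exploiting throughout that the twisted cohomology $H^n(X;M)$ is the cohomology of $\Hom_G(C_*(\widetilde X),M)$, a cochain complex that depends only on the underlying $\zz G$-module structure of $M$ and not on any larger ring over which $M$ happens to be a module. (For a general path-connected $X$ one may replace $X$ by a CW model, so there is no loss in assuming $\widetilde X$ exists.) Parts (1) and (2) are then immediate: $\zz$-modules are exactly abelian groups and $\zz G$-modules are exactly $G$-modules, which gives (1); and the trivial $RG$-modules form a subclass of all $RG$-modules, which gives (2). For (3), the homomorphism $\phi\colon R\to S$ lets one restrict scalars, turning any $S$-module into an $R$-module and any $SG$-module into an $RG$-module with the same underlying group, the same $G$-action, and hence the same cochain complex; so a non-vanishing $H^n$ seen over $S$ is also seen over $R$, whence $\tcd_S(X)\le\tcd_R(X)$ and $\cd_S(X)\le\cd_R(X)$.

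For (4), one inequality in each case is the instance of (3) applied to the inclusion $R\hookrightarrow S$. For the reverse, I would write $S=R\oplus C$ as $R$-$R$-bimodules and, given an $RG$-module $M$ (or an $R$-module $A$ for the $\tcd$ statement), form the $SG$-module $S\otimes_R M$, with $G$ acting on the right tensor factor and $S$ on the left; these actions commute since $S$ is central in $SG$. Restricting the $G$-action back down, $S\otimes_R M\cong M\oplus(C\otimes_R M)$ as $RG$-modules, so $H^n(X;S\otimes_R M)$ has $H^n(X;M)$ as a direct summand; a class witnessing $\cd_R(X)\ge n$ therefore yields one witnessing $\cd_S(X)\ge n$, and the same argument with trivial actions handles $\tcd$. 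The point needing care is purely bookkeeping: checking that $C\otimes_R M$ is indeed an $RG$-module and that the direct sum decomposition is $G$-equivariant.

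Parts (5) and (6) I would deduce from Shapiro's lemma at the chain level. It suffices to prove (5) for a connected covering $Y\to X$ (a disconnected covering is a disjoint union of connected ones, and $\cd_R$ of a disjoint union is the supremum over components). Then the universal cover of $Y$ is again $\widetilde X$, now with the restricted action of $H=\pi_1 Y\le G=\pi_1 X$, and the adjunction $\Hom_H(C_*(\widetilde X),N)\cong\Hom_G(C_*(\widetilde X),\coind_H^G N)$, where $\coind_H^G N=\Hom_{\zz H}(\zz G,N)$, identifies $H^n(Y;N)$ with $H^n(X;\coind_H^G N)$; since $\coind_H^G N$ is an $RG$-module whenever $N$ is an $RH$-module, this vanishes for $n>\cd_R(X)$, proving $\cd_R(Y)\le\cd_R(X)$. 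Part (6) is then the special case in which $X$ is a $K(G,1)$ and $Y$ is the covering corresponding to $H$, which is a $K(H,1)$; here one uses that $\cd_R$ is a homotopy invariant, so that these models compute $\cd_R(G)$ and $\cd_R(H)$.

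For (7) I would take $X$ to be a $K(G,1)$ with contractible universal cover $\widetilde X$. The augmented singular chain complex $C_*(\widetilde X)\to\zz$ is a resolution of $\zz$ by free $\zz G$-modules ($G$ acts freely on $\widetilde X$), and since it consists degreewise of free abelian groups this exact complex stays exact after applying $R\otimes_\zz(-)$; thus $R\otimes_\zz C_*(\widetilde X)\to R$ is a resolution of $R$ by free $RG$-modules. The extension-of-scalars adjunction gives $\Hom_G(C_*(\widetilde X),M)\cong\Hom_{RG}(R\otimes_\zz C_*(\widetilde X),M)$ for every $RG$-module $M$, so $H^n(X;M)\cong\Ext^n_{RG}(R,M)$, and hence $\cd_R(G)=\sup\{n:\Ext^n_{RG}(R,M)\neq 0\text{ for some }M\}={\mathrm{proj.dim.}}_{RG}(R)$ by the standard characterisation of projective dimension via $\Ext$. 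I do not expect any step to be a genuine obstacle; the two places that repay careful writing are the module bookkeeping in (4) and, in (7), the remark that extension of scalars sends the free $\zz G$-resolution $C_*(\widetilde X)$ of $\zz$ to a free $RG$-resolution of $R$ precisely because $C_*(\widetilde X)$ is degreewise free abelian, so that exactness survives the tensor product.
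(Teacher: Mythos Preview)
Your proposal is correct and follows essentially the same route as the paper: restriction of scalars for (1)--(3), the direct-summand trick with $S\otimes_R M$ for (4), Shapiro/coinduction for (5)--(6), and the free $RG$-resolution $R\otimes_\zz C_*(\widetilde X)$ for (7). The only cosmetic difference is that the paper phrases (4) using $SG\otimes_{RG}M$ rather than $S\otimes_R M$, but these are canonically isomorphic.
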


\begin{proof} A $\zz G$-module is the same thing as a $G$-module,
establishing (1).  Similarly, any $R$-module $A$ may be viewed 
as an $RG$-module by letting each element of $G$ act via the identity, 
which establishes (2).  A ring homomorphism $\phi$ as above allows one 
to define an $R$-module structure on 
any $S$-module and an $RG$-module structure on any $SG$-module,
which proves (3).  Under the hypotheses of (4), any $R$-module $A$ is 
isomorphic to a direct summand of the $S$-module $S\otimes_R A$ and 
any $RG$-module $M$ is isomorphic to a direct summand of the
$SG$-module $SG\otimes_{RG}M$.  Since cohomology commutes with finite
direct sums, this shows that $\tcd_R(X)\leq \tcd_S(X)$ and 
$\cd_R(X)\leq \cd_S(X)$.  The opposite inequalities follow from (3).  

If the fundamental group of $Y$ is the subgroup
$H\leq G$, and $M$ is any $RH$-module, define the coinduced
$RG$-module by 
$$\coind(M) = \Hom_{\zz H}(\zz G,M).$$ 
Then there is an isomorphism $H^*(X;\coind(M))\cong H^*(Y;M)$, which 
establishes (5).  Now (6) is just the special case of (5) in which 
$X$ is a classifying space for $G$, since then the covering space 
of $X$ with fundamental group $H$ is a classifying space for $H$.  

If $X$ is a classifying space for $G$, then the universal covering
space $\widetilde X$ is contractible, and so $C_*(\widetilde X)$ 
is a free resolution of $\zz$ over $\zz G$.  Similarly,
$C_*(\widetilde X;R)$ is a free resolution of $R$ over $RG$.  
Hence $H^*(X;M)$ is isomorphic to $\Ext_{\zz G}(\zz,M)$, and if 
$M$ is an $RG$-module $H^*(X;M)$ is also isomorphic to $\Ext_{RG}(R,M)$, 
which establishes (7).  (See for example \cite{brown} for more details.) 
\end{proof} 

\begin{remark} It is easy to find groups $G$ for which
$\tcd(G)<\cd(G)$.  There are many acyclic groups, or groups 
for which the group homology $H_i(G;\zz)=0$ for all $i>0$
(see for example 
\cite{bdh}).  For any 
such $G$, $\tcd(G)=0$, while $\cd(G)\neq 0$ unless $G$ is the 
trivial group.  
\end{remark} 

Before stating the next proposition, we remind the reader that 
the dimension of a simplicial complex is the supremum of the 
dimensions of its simplices, so that the empty simplicial 
complex has dimension $-1$.  

\begin{proposition} For any simplicial complex $L$ and any 
non-trivial ring~$R$, 
$$\tcd_R(T_L)=\cd_R(T_L)= \dim(T_L)=\dim(L)+1.$$ 
\end{proposition}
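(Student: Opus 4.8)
The plan is to establish the two outer equalities by elementary means and squeeze the cohomological dimensions between them. First, the equality $\dim(T_L)=\dim(L)+1$ is immediate from the definition of the CW-structure on $T_L$: an $i$-simplex $\sigma$ of $L$ contributes exactly the $(i+1)$-cell of $T(\sigma)$ to $T_L$, so the top cell of $T_L$ has dimension $n+1$, where $n=\dim(L)$; and when $L=\emptyset$, $T_L$ is a single point, $n=-1$, and $\dim(T_L)=0$. By Proposition~\ref{cdprops}(2) it then suffices to prove the lower bound $\dim(L)+1\leq\tcd_R(T_L)$ and the upper bound $\cd_R(T_L)\leq\dim(L)+1$.

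For the upper bound I would argue via the cochain complex computing twisted cohomology. Let $G=G_L=\pi_1(T_L)$, and let $\widetilde{T_L}$ be the universal cover. Lifting the CW-structure of $T_L$ gives $\widetilde{T_L}$ a $G$-CW-structure with cells only in dimensions $0$ through $n+1$, so $C_*(\widetilde{T_L})$ (a complex of free $\zz G$-modules, and $R\otimes_\zz C_*(\widetilde{T_L})$ a complex of free $RG$-modules) is concentrated in degrees $\le n+1$. Hence for every $G$-module (resp.\ $RG$-module) $M$ the cochain complex $\Hom_{\zz G}(C_*(\widetilde{T_L}),M)$ (resp.\ $\Hom_{RG}(R\otimes C_*(\widetilde{T_L}),M)$) vanishes above degree $n+1$, so $H^i(T_L;M)=0$ for all $i>n+1$ and all such coefficient modules. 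This gives $\cd_R(T_L)\leq n+1$, hence also $\tcd_R(T_L)\leq n+1$.

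For the lower bound it is enough to exhibit one abelian group $A$ admitting an $R$-module structure with $H^{n+1}(T_L;A)\neq0$; I would take $A=R$. By Theorem~\ref{facering}, $H^{n+1}(T_L;R)\cong\Lambda^{n+1}_R(L)$, which by the additive description of $H^*(T_L;R)$ given earlier is a direct product of copies of $R$ indexed by the $n$-simplices of $L$. When $n=\dim(L)\geq0$ there is at least one such simplex, and since $R$ is non-trivial this product is a non-zero $R$-module; when $L=\emptyset$ the corresponding statement is $H^0(T_L;R)\cong R\neq0$. In either case $\tcd_R(T_L)\geq n+1$, which closes the chain of equalities.

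There is no serious obstacle here. The only points requiring a little care are that twisted cohomology is still computed by a cochain complex of length $n+1$ — which holds because a covering space inherits a CW-structure of the same dimension — and the degenerate case $L=\emptyset$; both are dealt with above. One could instead deduce the lower bound from the retraction results, since for any $n$-simplex $\sigma$ of $L$ the simplex $\sigma$ is a full subcomplex of $L$, so the $(n+1)$-torus $T(\sigma)$ is a retract of $T_L$ and $\tcd_R(T(\sigma))=n+1$; but the direct computation of $H^{n+1}(T_L;R)$ via Theorem~\ref{facering} is the cleaner route.
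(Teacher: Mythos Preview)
Your argument is correct and is essentially a fully spelled-out version of the paper's one-line proof ``Immediate from Theorem~\ref{facering}'': the lower bound comes from $H^{n+1}(T_L;R)\cong\Lambda^{n+1}_R(L)\neq 0$, and the upper bound from the fact that $T_L$ has no cells above dimension $n+1$. The paper leaves the inequality $\cd_R(T_L)\leq\dim(T_L)$ implicit, whereas you make it explicit via the lifted CW-structure on the universal cover; this is the standard reason and exactly what the paper intends.
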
 

\begin{proof} 
Immediate from Theorem~\ref{facering}. 
\end{proof} 

\begin{proposition}\label{trivcohdim} 
For any simplicial complex $L$ with at least two vertices and for 
any $R$, 
$$\tcd_R(\tilt_L)= \max\{\dim(L),1+\tcd_R(L)\}.$$ 
\end{proposition}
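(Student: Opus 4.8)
The plan is to extract everything from the two short exact sequences of Corollary~\ref{additivecoh}, applied with coefficients in an arbitrary $R$-module $A$; Theorem~\ref{bbdiff} is already incorporated into that corollary, so no further input is needed. Write $d=\dim(L)$. Recall that $\tcd_R(\tilt_L)$ is the supremum of those $n$ for which $H^n(\tilt_L;A)\ne 0$ for some $R$-module $A$, and likewise for $\tcd_R(L)$; since $H^n(L;A)=\redH^n(L;A)$ for $n\ge 1$ and $L$ is non-empty, $\tcd_R(L)\ge 0$ always, and $\redH^{n-1}(L;A)=0$ for every $A$ whenever $n-1>\tcd_R(L)$.

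For the upper bound I would use the second sequence of Corollary~\ref{additivecoh},
$$0\to \prod_Z\redH^{n-1}(L;A)\to H^n(\tilt_L;A)\to C_+^{n-1}(L;A)/Z_+^{n-1}(L;A)\to 0,$$
and show that both outer terms vanish once $n>\max\{d,1+\tcd_R(L)\}$. The right-hand term is isomorphic to the coboundary module $B_+^n(L;A)\subseteq C_+^n(L;A)$, hence is zero as soon as $n>d$ because the augmented cochain complex of $L$ lives in degrees $-1$ through $d$. The left-hand term is zero because $n-1>\tcd_R(L)$ forces $\redH^{n-1}(L;A)=0$ for all $A$. Thus $H^n(\tilt_L;A)=0$, giving $\tcd_R(\tilt_L)\le\max\{d,1+\tcd_R(L)\}$.

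For the lower bounds I would use the first sequence of Corollary~\ref{additivecoh}, which realises $C_+^{n-1}(L;A)/B_+^{n-1}(L;A)$ as a submodule of $H^n(\tilt_L;A)$; together with the tautological exact sequence $0\to\redH^{n-1}(L;A)\to C_+^{n-1}(L;A)/B_+^{n-1}(L;A)\to B_+^n(L;A)\to 0$, it is enough to make $\redH^{n-1}(L;A)$ or $B_+^n(L;A)$ non-zero for a suitable $A$. Three cases cover the claimed maximum. Taking $n=1$ and $A=R$: $C_+^0(L;R)/B_+^0(L;R)$ is the $R$-valued functions on the vertex set modulo the constant functions, which is non-zero exactly because $L$ has at least two vertices; hence $\tcd_R(\tilt_L)\ge 1$. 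If $m:=\tcd_R(L)\ge 1$, choose $A$ with $H^m(L;A)=\redH^m(L;A)\ne 0$; then $H^{m+1}(\tilt_L;A)\ne 0$, so $\tcd_R(\tilt_L)\ge 1+\tcd_R(L)$ (when $m=0$ this bound reads $\tcd_R(\tilt_L)\ge 1$, already done). If $d\ge 1$, pick a $d$-simplex $\sigma$ and a codimension-one face $\tau$ of it, and let $f$ be the indicator cochain of $\tau$; then $\delta f(\sigma)=\pm 1$, so $B_+^d(L;R)\ne 0$ and $\tcd_R(\tilt_L)\ge d$ (when $d=0$ the maximum equals $1+\tcd_R(L)$, already done). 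Combining the three lower bounds with the upper bound gives the stated equality.

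Since Corollary~\ref{additivecoh} does essentially all the work, the only delicate point is the bookkeeping at the bottom of the range: one must check that the hypothesis of at least two vertices is precisely what both excludes the degenerate case $L=\{\mathrm{pt}\}$ and forces $H^1(\tilt_L;R)\ne 0$, so that the claimed maximum is attained even when $d=0$ or $\tcd_R(L)=0$.
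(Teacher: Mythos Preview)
Your proof is correct and follows exactly the route the paper intends: the paper's own proof is the single line ``Immediate from Corollary~\ref{additivecoh}'', and you have simply unpacked that claim carefully, using the second short exact sequence for the upper bound and the first (together with the tautological extension $0\to\redH^{n-1}\to C_+^{n-1}/B_+^{n-1}\to B_+^n\to 0$) for the lower bounds. Your bookkeeping at the bottom of the range---in particular the observation that at least two vertices is exactly what makes $C_+^0(L;R)/B_+^0(L;R)$ non-zero---is precisely the reason for that hypothesis, as the paper's later remark confirms.
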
 

\begin{proof} 
Immediate from Corollary~\ref{additivecoh}.  
\end{proof}


The following theorem is the first result in this paper for which 
we rely on techniques from Bestvina-Brady~\cite{bb}.  

\begin{theorem} \label{cdnonflag}
Let $L$ be a simplicial complex with at least two vertices 
and let~$R$ be a non-trivial ring.  
\begin{enumerate} 
\item
$\dim(L) \leq \tcd_R(\tilt_L)\leq \cd_R(\tilt_L)\leq \dim(L)+1$.  
\item
If $L$ is $R$-acyclic, then $\tcd_R(L)=\cd_R(\tilt_L)= \dim(L)$.  
\end{enumerate} 
\end{theorem}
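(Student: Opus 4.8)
\emph{Plan.} Statement (1) is elementary, and I would reduce (2) to a single inequality which is then proved by feeding the Morse theory of~\cite{bb} into the cellular chain complex of the universal cover of $\tilt_L$.

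For (1): the inequality $\tcd_R(\tilt_L)\le\cd_R(\tilt_L)$ is Proposition~\ref{cdprops}(2), and Proposition~\ref{trivcohdim} gives $\tcd_R(\tilt_L)=\max\{\dim(L),\,1+\tcd_R(L)\}\ge\dim(L)$, where the hypothesis that $L$ has at least two vertices is used. For the upper bound, $\tilt_L$ is a covering space of $T_L$, so $\cd_R(\tilt_L)\le\cd_R(T_L)=\dim(T_L)=\dim(L)+1$ by Proposition~\ref{cdprops}(5); equivalently, $\tilt_L$ is a CW-complex of dimension $\dim(L)+1$. Granting (1), statement (2) reduces to proving $\cd_R(\tilt_L)\le\dim(L)$. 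Write $n=\dim(L)$, and note $n\ge 1$: an $R$-acyclic complex is connected, hence not a discrete set of two or more points.

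To prove $\cd_R(\tilt_L)\le n$ I would use that $\tilmu_L:\tilt_L\to\rr$ is affine with nowhere-vanishing gradient on each cube of the cubical complex $\tilt_L$, hence is a Morse function in the sense of~\cite{bb}. Its critical values are the integers, there is a single vertex of $\tilt_L$ above each integer, and a direct inspection of the cubical structure shows that the ascending and descending links of every vertex are each isomorphic to $L$; since $L$ is $R$-acyclic, so are all of these links. Fix a half-integer $t_0$ and set $Y=\tilmu_L^{-1}(t_0)$, a complex of dimension $n$. Applying the Morse Lemma of~\cite{bb} repeatedly (to $\tilmu_L$ for the levels above $t_0$, and to $-\tilmu_L$ below) presents $\tilt_L$, up to homotopy equivalence, as $Y$ with a countable family of cones $\mathrm{Cone}(L)$ on copies of $L$ attached one at a time, the cone coming from the vertex at level $c$ being glued along a copy of $L$ lying in the union of $Y$ and the cones attached earlier. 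As $L$ is connected, each such attachment only kills fundamental group; hence $Y\hookrightarrow\tilt_L$ is surjective on $\pi_1$, and each attached copy of $L$ (and so each attached cone) maps into $\tilt_L$ trivially on $\pi_1$.

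Now pass to the universal cover $\widehat{\tilt_L}$ of $\tilt_L$, with deck group $H_L$, and let $\widehat Y$ be its subspace lying over $Y$. Because $Y\hookrightarrow\tilt_L$ is $\pi_1$-surjective, $\widehat Y$ is connected and is a regular $H_L$-cover of $Y$, so $C_*(\widehat Y;R)$ is a complex of free $R[H_L]$-modules concentrated in degrees $\le n$. Because each attached cone is $\pi_1$-trivial in $\tilt_L$, it lifts to a disjoint union of copies of $\mathrm{Cone}(L)$ indexed by $H_L$; hence the relative chain complex $C_*(\widehat{\tilt_L},\widehat Y;R)$ is a bounded (degrees $0$ through $n+1$) complex of free $R[H_L]$-modules, filtered by the successive cone attachments with subquotients isomorphic to $R[H_L]\otimes_R C_*(\mathrm{Cone}(L),L;R)$. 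Each subquotient is acyclic, since $H_*(\mathrm{Cone}(L),L;R)=\redH_{*-1}(L;R)=0$; therefore $C_*(\widehat{\tilt_L},\widehat Y;R)$ is acyclic, and being bounded with projective terms it is contractible. Thus $C_*(\widehat Y;R)\hookrightarrow C_*(\widehat{\tilt_L};R)$ is a chain homotopy equivalence of $R[H_L]$-complexes, so for every $R[H_L]$-module $M$ one has $H^*(\tilt_L;M)\cong H^*\big(\Hom_{R[H_L]}(C_*(\widehat Y;R),M)\big)$, which vanishes in degrees above $n$. This gives $\cd_R(\tilt_L)\le n$, and combined with (1) it yields $\tcd_R(\tilt_L)=\cd_R(\tilt_L)=\dim(L)$.

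The delicate step is the last one. The copies of $L$ being coned off need not be $\pi_1$-injective — this is exactly the phenomenon that allows $H_L$ to fail to be finitely presented for a general $R$-acyclic $L$ — so one cannot simply run a level-by-level argument over a fixed fundamental group. The way around it is to package all of the (infinitely many) cone attachments at once into the relative chain complex of universal covers, and to exploit that this complex is not merely acyclic but, being bounded with free terms, contractible over $R[H_L]$; the boundedness, which comes from $\dim(L)<\infty$, is what makes the truncation to degrees $\le n$ legitimate.
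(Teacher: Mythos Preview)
Your proof is correct and follows the same route as the paper: use Bestvina--Brady Morse theory to exhibit a level set of dimension $\dim(L)$ whose inclusion into the universal cover is an $R$-homology equivalence, and then exploit freeness of the cellular chain complexes over $R[H_L]$ to transfer this to cohomology with arbitrary local coefficients. The only difference is organizational---the paper works directly in the universal cover $X_L$ with the level set $Y_L=f_L^{-1}(0)$ there, whereas you apply the Morse lemma downstairs in $\tilt_L$ and then lift, which costs you the extra $\pi_1$-surjectivity and cone-lifting checks but has the virtue of spelling out the homological-algebra step (an acyclic bounded complex of projectives is contractible) that the paper leaves implicit.
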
 

\begin{proof} The claims for arbitrary $L$ follow easily from 
earlier results.  Proposition~\ref{trivcohdim} implies that 
$\dim(L)\leq \tcd_R(\tilt_L)$.  By Proposition~\ref{cdprops}, one 
has that $\tcd_R(\tilt_L)\leq \cd_R(\tilt_L)\leq \cd_R(T_L)$, 
and of course $\cd_R(T_L)\leq \dim(T_L)=\dim(L)+1$.

It remains to show that when $L$ is $R$-acyclic, $\cd_R(\tilt_L)\leq
\dim(L)$.  Following Bestvina and Brady~\cite{bb}, 
let $X_L$ be the universal covering space of
  $T_L$, or equivalently of $\tilt_L$.  Now let $f_L:X_L\rightarrow
  \rr$ be the composite
$$X_L\rightarrow X_L/H_L=\tilt_L \mapright{\tilde \mu}\rr,$$ 
where $\tilde\mu$ is the lift of the map $\mu_L:T_L\rightarrow \ttt$.  
Now define $Y_L=f^{-1}(0)\subseteq X_L$.  There is a natural cubical 
CW-structure on $X_L$, whose cells are the lifts to $X_L$ of the cells
of $T_L$.  One can also put a CW-structure on $Y_L$, such that each cell
of $Y_L$ is the intersection of $Y_L$ with a cell of $X_L$.  For this 
CW-structure, the dimension of $Y_L$ is equal to $\dim(L)$.  It can 
be shown that $X_L$ is homotopy equivalent to $Y_L$ with infinitely
many subspaces homotopy equivalent to $L$ coned off (this is from 
\cite{bb}, but see also \cite{vfgroups} which explicitly checks 
this in the case when $L$ is infinite).  

Since $L$ is $R$-acyclic, it follows that the inclusion of $Y_L$ in $X_L$
induces an isomorphism of $R$-homology.  The cellular chain complexes
$C_*(Y_L;R)$ and $C_*(X;R)$ consist of free $RH_L$-modules, and so it 
follows that for any $RH_L$-module $M$, 
$$H^*(\tilt_L;M)\cong H^*(Y_L/H_L;M).$$ 
This shows that $\cd_R(\tilt_L)\leq \dim(L)$ as required.  
\end{proof} 

\begin{remark} In the case when $L$ is empty, $\tilt_L$ is
  0-dimensional, and consists of a single free $Z$-orbit of 
points.  In the case when $L$ is a single point, $\tilt_L$ is 
homeomorphic to $\rr$, with $Z$ acting via the translation 
action of $\zz$.  It is clear that the case when $L$ is empty 
is exceptional for $\tilt_L$.  The reason why the case when $L$ is a 
single point needs to be excluded from Proposition~\ref{trivcohdim} 
and from Theorem~\ref{cdnonflag} is that the formulae for
$H*(\tilt_L;R)$ given in Section~\ref{sec:coh} involve the
\emph{reduced} cohomology of $L$, whereas the definition of
$\tcd(L)$ involves the \emph{unreduced} cohomology of $L$.  
This only makes a difference when $L$ is both 0-dimensional 
and $R$-acyclic, i.e., the case when $L$ is a single point.  
\end{remark} 

\section
{Bestvina-Brady groups} 

In this section, we give a complete calculation of the cohomological
dimension of Bestvina-Brady groups, or equivalently the cohomological 
dimension of the spaces $\tilt_L$ for $L$ a flag complex.  We impose 
some conditions on the coefficient ring $R$ that were not previously 
required.  The reason why we work only with flag complexes in this
section is that we need to know that when $L\leq K$ is a full
subcomplex of $K$, then $\tilt_L$ is homotopy equivalent to a covering 
space of $\tilt_K$.  

\begin{theorem} \label{cdbbgp} 
Let $R$ be either a field or a subring of $\qq$.  
Let $L$ be a flag complex with at least two vertices.  (This 
implies that $H_L$ is infinite, and that $\tilt_L$ is a classifying 
space for $H_L$.)  The following equations hold:  
$$\cd_R(H_L)=\tcd_R(H_L)=\max\{\dim(L),1+\tcd_R(L)\}.$$ 
\end{theorem}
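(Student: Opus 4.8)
The plan is to leverage the three facts already in hand: that $\tilt_L$ is an Eilenberg--Mac~Lane space for $H_L$ (Proposition~\ref{aspher}, since $L$ is flag), so that $\cd_R$ and $\tcd_R$ of $H_L$ agree with those of the space $\tilt_L$; that $\tcd_R(\tilt_L)=\max\{\dim L,1+\tcd_R(L)\}$ (Proposition~\ref{trivcohdim}); and that $\dim L\le\tcd_R(\tilt_L)\le\cd_R(\tilt_L)\le\dim L+1$, with $\cd_R(\tilt_L)=\dim L$ whenever $L$ is $R$-acyclic (Theorem~\ref{cdnonflag}, Proposition~\ref{cdprops}(2)). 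Combining the first two with Proposition~\ref{cdprops}(2) already yields $\cd_R(\tilt_L)\ge\max\{\dim L,1+\tcd_R(L)\}$, so the whole theorem reduces to the reverse inequality for $\cd_R(\tilt_L)$. Write $n=\dim L$. If $\tcd_R(L)=n$ the right-hand side is $n+1$ and there is nothing to prove, as $\cd_R(\tilt_L)\le n+1$ by Theorem~\ref{cdnonflag}(1). So the entire problem is: \emph{if $\tcd_R(L)<n$ then $\cd_R(\tilt_L)\le n$.}

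For this I would first use the hypothesis on $R$. Because $R$ is a field or a subring of $\qq$ --- hence a principal ideal domain over which cohomology with arbitrary coefficient modules is governed, via universal coefficients, by homology with $R$-coefficients --- the condition $\tcd_R(L)<n$ is equivalent to the homological statement $\redH_n(L;R)=0$ (together with, when $R$ is not a field, the vanishing of the $R$-torsion of $\redH_{n-1}(L;R)$). This translation can fail for a general ring, which is why the section restricts the coefficients. Granting it, my plan is to embed $L$ as a full subcomplex of an $R$-acyclic flag complex $K$ of the same dimension $n$. Once $K$ is built, the covering-space fact recalled at the start of this section gives that $\tilt_L$ is homotopy equivalent to a covering space of $\tilt_K$, so $\cd_R(\tilt_L)\le\cd_R(\tilt_K)$ by Proposition~\ref{cdprops}(5); and since $K$ is $R$-acyclic of dimension $n$, Theorem~\ref{cdnonflag}(2) gives $\cd_R(\tilt_K)=n$. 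Hence $\cd_R(\tilt_L)\le n$, as needed. (If $L$ is itself $R$-acyclic one takes $K=L$; the substance is the opposite case, where $1\le\tcd_R(L)<n$ and so $n\ge 2$.)

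The construction of $K$ is the crux. I would build it from $L$ by a finite sequence of simplicial cone attachments: starting with $K'=L$, and given a \emph{full} subcomplex $\Sigma$ of the current complex $K'$ with $\dim\Sigma\le n-1$, adjoin one new vertex $w$ together with every simplex $\{w\}\cup\sigma$ with $\sigma\in\Sigma$, producing $K'\cup(w*\Sigma)$. Fullness of $\Sigma$ in $K'$ is exactly what forces the enlarged complex to be flag again; moreover $K'$, and hence $L$, remains a full subcomplex of it, and its dimension is still $\max\{\dim K',1+\dim\Sigma\}=n$. A Mayer--Vietoris computation for $K'\cup_{\Sigma}(w*\Sigma)$ (with $w*\Sigma$ contractible) shows that such an attachment replaces $\redH_*(K';R)$ by the cokernel of $\redH_*(\Sigma;R)\to\redH_*(K';R)$ in each degree, together with a degree-shifted copy of the corresponding kernel; in particular, taking $\dim\Sigma=m$ kills $\redH_m(K';R)$ precisely when $\redH_m(\Sigma;R)\to\redH_m(K';R)$ is onto, and introduces nothing in degree $m+1$ precisely when that same map is injective. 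Peeling off the top nonzero degree repeatedly therefore reduces the construction to the following combinatorial statement about flag complexes: if $K'$ is flag of dimension $n$ with $\redH_*(K';R)$ concentrated in degrees $\le m\le n-1$, then $K'$ has a full subcomplex $\Sigma$ of dimension $\le m$ for which $\redH_m(\Sigma;R)\to\redH_m(K';R)$ is an isomorphism and $\redH_{m-1}(\Sigma;R)\to\redH_{m-1}(K';R)$ is injective. A finite iteration of attachments along such $\Sigma$'s then drives $\redH_*(-;R)$ to zero while preserving flagness, dimension $n$, and the fullness of $L$.

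Proving the existence of these subcomplexes $\Sigma$ is the step I expect to be hardest; everything else is bookkeeping with results already established. The natural attack is to let $\Sigma$ start as the full subcomplex of $K'$ spanned by the vertices occurring in a chosen set of $m$-cycles representing a basis of $\redH_m(K';R)$, and then prune vertices one at a time, removing a vertex $v$ only when the link of $v$ in the current subcomplex is $R$-homologically trivial in the range of degrees near $m$ that matters --- by the link--star Mayer--Vietoris sequence such a removal leaves $\redH_m$ unchanged and keeps $\redH_{m-1}$ injecting into $\redH_{m-1}(K';R)$ --- continuing until the dimension has come down to $m$. The clique-complex (flag) structure of $K'$ and the $R$-coefficient hypothesis are what one must use to see that this pruning can be carried out and that the resulting $\Sigma$ meets both requirements. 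Making this precise, or replacing it by a cleaner combinatorial construction of $K$, is the heart of the proof.
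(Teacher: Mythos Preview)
Your overall strategy matches the paper's exactly: reduce to the inequality $\cd_R(\tilt_L)\le n$ when $\tcd_R(L)<n$, and prove it by embedding $L$ as a full subcomplex of an $R$-acyclic flag complex $K$ with $\dim K=n$, then using $H_L\le H_K$ and Theorem~\ref{cdnonflag}(2). The place you yourself flag as ``the step I expect to be hardest'' is indeed the gap. Your plan for building $K$---iteratively coning off full subcomplexes $\Sigma\subseteq K'$ of dimension $\le m$ with $\redH_m(\Sigma;R)\to\redH_m(K';R)$ an isomorphism and $\redH_{m-1}(\Sigma;R)\to\redH_{m-1}(K';R)$ injective---requires producing such $\Sigma$, and the pruning sketch does not do this: there is no reason a flag complex should contain a \emph{full} subcomplex of the right dimension carrying exactly its top homology, and vertex-deletion governed by link conditions gives no evident mechanism for forcing the dimension down to $m$ while preserving the needed homological control.

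The paper avoids this search entirely by decoupling the homotopy-theoretic construction from the flag/full bookkeeping. First, form $L_1=L\cup C'L$, the union of $L$ with the cone on its $(n-2)$-skeleton; this is $(n-2)$-connected, has $H^n(L_1;A)\cong H^n(L;A)$, and (via universal coefficients over the PID $R$) has $H_n(L_1;R)=0$ and $H_{n-1}(L_1;R)$ free. Now the Hurewicz theorem applies: choose integral $(n-1)$-cycles mapping to an $R$-basis of $H_{n-1}(L_1;R)$, realize them by maps $S^{n-1}\to L_1$, pass to simplicial approximations, and attach triangulated $n$-cells via simplicial mapping cylinders to obtain an $R$-acyclic $n$-dimensional simplicial complex $L_2\supseteq L$. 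Finally---and this is the device that replaces your fullness hypotheses on $\Sigma$---take $K=(L_2,L)'$, the relative barycentric subdivision: Lemma~\ref{relbary} shows this is automatically a flag complex containing $L$ as a full subcomplex. Thus the paper never needs to find subcomplexes with prescribed homology inside a flag complex; it builds freely and repairs flagness and fullness in one stroke at the end.
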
 

The proof of the theorem will require two lemmas, the second of which 
is a strengthening of a lemma from~\cite{bes}.  

\begin{lemma} \label{relbary} 
Suppose that $L$ is a 
flag complex and is a subcomplex of a simplicial complex $K$.  
The relative barycentric subdivision $(K,L)'$ is a flag complex 
containing $L$ as a full subcomplex.  
\end{lemma}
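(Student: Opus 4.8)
The plan is to argue directly from the combinatorial description of the relative barycentric subdivision. Recall that the vertices of $(K,L)'$ are the vertices of $L$ together with one barycentre $\hat\sigma$ for each simplex $\sigma$ of $K$ that does not lie in $L$, with the convention that the barycentre of a $0$-simplex is that vertex itself (so a vertex of $K$ lying outside $L$ is its own barycentre); in particular no barycentre vertex is a vertex of $L$. A finite set of vertices of $(K,L)'$ spans a simplex precisely when it has the form $\tau\cup\{\hat\sigma_1,\dots,\hat\sigma_k\}$, where $\tau$ is a (possibly empty) simplex of $L$, where $\sigma_1\subsetneq\sigma_2\subsetneq\cdots\subsetneq\sigma_k$ is a chain of simplices of $K$ none of which lies in $L$, and where $\tau$ is a face of $\sigma_1$ when $k\geq1$. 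Two consequences are immediate: $L$ is the subcomplex of $(K,L)'$ consisting of the simplices with $k=0$, and any simplex of $(K,L)'$ all of whose vertices are vertices of $L$ must have $k=0$ (as no barycentre vertex is a vertex of $L$) and hence lies in $L$. So $L$ is a full subcomplex of $(K,L)'$, and it remains to show that $(K,L)'$ is flag.

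Let $W$ be a finite set of vertices of $(K,L)'$ that pairwise span edges; I want to show $W$ spans a simplex. Write $W=W_L\sqcup W_B$, where $W_L=W\cap V(L)$ and $W_B=\{\hat\sigma_1,\dots,\hat\sigma_k\}$ collects the barycentre vertices of $W$. First, for $i\neq j$ the edge $\{\hat\sigma_i,\hat\sigma_j\}$ has neither endpoint in $V(L)$, so in the description above its $\tau$-part is empty and $\sigma_i$, $\sigma_j$ are comparable; since $\sigma_1,\dots,\sigma_k$ are finite in number and pairwise comparable they form a chain, so after reindexing $\sigma_1\subsetneq\cdots\subsetneq\sigma_k$. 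Second, for $v,v'\in W_L$ the edge $\{v,v'\}$ has both endpoints in $V(L)$, hence both lie in its $\tau$-part, so $\{v,v'\}$ is an edge of $L$; thus $W_L$ is the vertex set of a complete subgraph of the $1$-skeleton of $L$, and here --- and only here --- the hypothesis that $L$ is flag is used, to conclude that $W_L$ spans a simplex $\tau\in L$. Third, for $v\in W_L$ and any $i$, in the edge $\{v,\hat\sigma_i\}$ the vertex $v$ lies in the $\tau$-part and hence is a vertex of $\sigma_i$; in particular every vertex of $\tau$ is a vertex of the smallest simplex $\sigma_1$, so $\tau$ is a face of $\sigma_1$. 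Combining the three steps, $W=\tau\cup\{\hat\sigma_1,\dots,\hat\sigma_k\}$ has exactly the form required to span a simplex of $(K,L)'$, and flagness follows.

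Once the description of $(K,L)'$ is set up the argument is essentially bookkeeping, so the only point that needs care is that description --- in particular the convention identifying the barycentre of a vertex with the vertex (so that vertices of $K$ lying outside $L$ are not double counted), together with the verification that a two-element subset of the vertex set of $(K,L)'$ can be a simplex only in the three ways used above. It also seems worth recording explicitly that flagness of $L$ is genuinely needed: it is used only in the second step, but if $L$ is not flag then already $(L,L)'=L$ fails to be flag, so the hypothesis cannot be dropped.
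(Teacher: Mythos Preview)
Your proof is correct and follows essentially the same route as the paper's: both split a pairwise-adjacent vertex set into its $L$-part and its barycentre part, use flagness of $L$ on the former, pairwise comparability on the latter, and the mixed edges to see that the $L$-simplex is a face of the smallest barycentre simplex. Your write-up is somewhat more careful about the conventions and records the necessity of the flag hypothesis, but the underlying argument is the same.
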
 

\begin{proof} Before recalling the definition of the relative 
barycentric subdivision $(K,L)'$, we remind the reader that each 
simplicial complex contains a unique $-1$-simplex corresponding 
to the empty subset of its vertices.  
The vertex set of 
$(K,L)'$ is the disjoint union of the vertex set of $L$ and the 
set of simplices of $K$ not contained in $L$.  An $n$-simplex of 
$(K,L)'$ has the form $(\sigma_0<\sigma_1<\cdots<\sigma_r)$, for 
some $r$ satisfying $0\leq r\leq n+1$.  
Here $\sigma_0$ denotes an $(n-r)$-simplex of $L$, 
each $\sigma_i$ is a simplex of $M$, and $\sigma_i$ is not contained 
in $L$ if $i>0$.  

Suppose that a finite subset $S$ of the vertices of $(K,L)'$ has the 
property that any two of its members are joined by an edge.  Since 
$L$ is flag, this implies that the set $S\cap L$ is the vertex set 
of a simplex $\sigma_0$ of $L$.  Each element of $S-L$ is a simplex 
$\sigma_i$ of $M$ not contained in $L$.  The existence of an edge
between each element of $S\cap L$ and each element of $S-L$ implies 
that each $\sigma_i$ contains $\sigma_0$, and the existence of an 
edge between each pair of elements of $S-L$ implies that the
$\sigma_i$ are totally ordered by inclusion.  This shows that $(K,L)'$ 
is a flag complex.  

From the description of the simplices of $(K,L)'$, it is easy to see 
that any simplex of $(K,L)'$ whose vertex set lies in $L$ is in fact 
a simplex in $L$, which verifies that $L$ is a full subcomplex of
$(K,L)'$.  
\end{proof} 

\begin{lemma} \label{flagemb} 
Let $L$ be a flag complex, let $R$ be either a subring of $\qq$ or a 
field of prime order, and suppose that 
$\tcd_R(L)<\dim(L)$.  Then there is an $R$-acyclic flag complex $K$ 
containing $L$ as a full subcomplex such that $\dim(L)=\dim(K)$.  
\end{lemma}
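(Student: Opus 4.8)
Write $n=\dim(L)$. The plan is to realise $K$ as the union of a (possibly transfinite) increasing chain of flag complexes, all of dimension $n$ and all containing $L$ as a full subcomplex, so that every reduced $R$-homology class dies in the colimit. The first task is to decode the hypothesis. Saying $\tcd_R(L)<n$ means $H^n(L;A)=0$ for every $R$-module $A$; since $R$ is either a field or a principal ideal domain, the cycle and boundary submodules of the cellular chain complex of $L$ are $R$-projective, so the universal coefficient theorem (valid even when $L$ is infinite) turns this into the two statements that $\redH_n(L;R)=0$ and that $\redH_{n-1}(L;R)$ is a projective, hence free, $R$-module. The construction will be set up so as to preserve both statements at every stage; keeping $\redH_{n-1}$ free is unproblematic, because the short exact sequences that arise are extensions of free modules by free modules, and it is keeping the top homology zero that is the crux.

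The elementary step is the following. Given a flag complex $M$ of dimension $n$ with $L$ a full subcomplex and $\redH_n(M;R)=0$, and a nonzero class $\xi\in\redH_i(M;R)$ with $i\le n-1$: first replace $M$ by a flag complex of the same dimension in which $L$ is still full and $\redH_n$ still vanishes but $\xi$ now has a representing cycle whose support avoids $L$ (this is achieved by a collaring of $L$, and is one of the technical points to be handled). Then pass to the relative barycentric subdivision $(M,L)'$, which by Lemma~\ref{relbary} is again flag with $L$ full and has the same underlying space. Let $Z\subseteq(M,L)'$ be the subdivided support of such a cycle; since it avoids $L$ it is an ordinary barycentric subdivision of a subcomplex of $M$, hence is full in $(M,L)'$ --- a barycentric subdivision makes every subcomplex full --- and flag, being itself a barycentric subdivision, and $\dim Z=i$. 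Now form $M'=(M,L)'\cup_Z(Z*p)$ by coning $Z$ off with a new vertex $p$. Because $Z$ is a full flag subcomplex of a flag complex, $M'$ is flag (any complete subgraph through $p$ lies, once $p$ is deleted, inside $Z$), the complex $L$ stays full since $p\notin L$, and $\dim M'=n$ since $\dim Z=i\le n-1$. A Mayer--Vietoris argument shows that $\xi$ becomes zero in $M'$ and that the only possibly new homology is in degree $i+1$. Iterating this step over a fair transfinite enumeration of the pairs (stage, surviving class of degree at most $n-1$), and taking unions at limit stages, produces a flag complex $K\supseteq L$ of dimension $n$ with $L$ full and $\redH_i(K;R)=0$ for all $i\le n-1$; and $\redH_n(K;R)=\operatorname{colim}\redH_n(M;R)=0$ provided every stage satisfies $\redH_n=0$.

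The main obstacle is precisely that proviso. The Mayer--Vietoris computation shows that coning $Z$ off introduces a new top homology class exactly when $i=n-1$ and the natural map from the module of $(n-1)$-cycles carried by $Z$ into $\redH_{n-1}((M,L)';R)$ fails to be injective, i.e.\ when $Z$ `encloses' some of the $n$-simplices of the ambient complex. Here the hypothesis is essential: since $\redH_{n-1}$ is kept free throughout, one can choose a free basis of $\redH_{n-1}$ and represent it by a family of cycles, and the aim is, after enough subdivisions, to realise this family by pairwise disjoint and suitably `spherical' supports, so that the subcomplex coned off carries exactly the chosen generators together with no extra bounding cycle and the displayed map is injective. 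Arranging this --- and performing the killing in the order ``all degrees at most $n-2$ first, then degree $n-1$'' --- is the heart of the argument, and is the point at which this lemma strengthens the flag-embedding lemma of~\cite{bes}; when $R$ is a field the freeness is automatic, and only the geometric positioning of the supports is at issue.
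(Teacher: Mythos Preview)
Your proposal correctly identifies the obstacle --- coning off the support $Z$ of an $(n-1)$-cycle may create top homology whenever $Z$ carries an $(n-1)$-cycle that already bounds in $M$ --- but it does not actually overcome it. You say the aim is, ``after enough subdivisions, to realise this family by pairwise disjoint and suitably `spherical' supports, so that the subcomplex coned off carries exactly the chosen generators together with no extra bounding cycle''. But subdivision does not change the homotopy type of the support of a cycle, and there is no mechanism in your argument that produces sphere-like supports. Killing the lower $R$-homology first (your ``degrees at most $n-2$ first'' step) does not give you Hurewicz: coning off individual $R$-cycles makes the intermediate complex $R$-acyclic below degree $n-1$, not $(n-2)$-connected, so there is no reason every class in $\redH_{n-1}$ should be representable by a sphere, nor by any cycle whose support carries no bounding $(n-1)$-cycle. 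The transfinite bookkeeping, the collaring step, and the full-subcomplex checks are all plausible, but the passage you yourself flag as ``the heart of the argument'' is an aspiration, not an argument.

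The paper avoids this difficulty with a single clean move: rather than coning off cycles one at a time, it cones off the entire $(n-2)$-skeleton of $L$ with one new vertex to form $L_1=L\cup C'L$. This makes $L_1$ genuinely $(n-2)$-connected (in the integral, topological sense), while leaving $H^n(-;A)$ unchanged, so the hypothesis $\tcd_R(L)<n$ carries over to $L_1$ and the universal coefficient argument yields $H_n(L_1;R)=0$ and $H_{n-1}(L_1;R)$ free. Now Hurewicz does apply: each basis element of $H_{n-1}(L_1;R)$ lifts to an integral cycle and is realised by an actual map $S^{n-1}\to L_1$. Attaching $n$-cells along simplicial approximations of these spheres (via simplicial mapping cylinders) then kills $H_{n-1}(-;R)$ without creating any $H_n(-;R)$, precisely because a triangulated $(n-1)$-sphere carries only one $(n-1)$-cycle up to scalar. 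A final relative barycentric subdivision $(L_2,L)'$ restores the flag property and the fullness of $L$. This is both shorter and complete; the key idea you are missing is to make the complex $(n-2)$-connected \emph{all at once} rather than class by class.
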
 

\begin{proof} We may assume that $\dim(L)=n$ is finite, and 
we may assume that $n\geq 2$.  Let $C'L$ denote the cone on 
the $(n-2)$-skeleton of $L$, and let $L_1$ be the union $L_1=L\cup
C'L$.  Now $L_1$ is $(n-2)$-connected, and the inclusion map 
$L\rightarrow L_1$ induces an isomorphism $H^n(L_1;A)\rightarrow 
H^n(L;A)$ for any abelian group $A$.  

In each case, $R$ is a principal ideal domain, and so by the
universal coefficient theorem for
cohomology~\cite[V.3.3]{hilsta}, for any $R$-module $A$ we have that 
$$H^n(L_1;A) \cong \Hom_{R}(H_n(L_1,R),A)\oplus
\Ext^1_{R}(H_{n-1}(L_1,R),A).$$
The hypotheses therefore imply that $H_n(L_1;R)=0$, 
and that $H_{n-1}(L_1;R)$ is a projective
$R$-module.  (Since $L_1$ is $(n-2)$-connected, one also 
has that $\redH_i(L_1;R)=0$ for each $i<n-1$.)  
Every projective module for a principal ideal domain is 
free~\cite[I.5.1]{hilsta}, and so $H_{n-1}(L_1;R)$ is a free
$R$-module.  Since $\redH_{n-2}(L_1;\zz)=0$ (because $L_1$ is
$(n-2)$-connected), the universal coefficient theorem for
homology~\cite[V.2.5]{hilsta} tells us that $H_{n-1}(L_1;R)\cong
H_{n-1}(L_1;\zz)\otimes R$.  

Hence there exist integral cycles $z_i\in C_{n-1}(L_1;\zz)$ whose 
images in the group $C_{n-1}(L_1;R)$ map to an $R$-basis for
$H_{n-1}(L_1;R)$.  By the Hurewicz theorem, each $z_i$ is realized
by some map $f_i:S^{n-1}\rightarrow L_1$.  Replace $f_i$ by a
simplicial approximation $f'_i:S_i\rightarrow L_1$, where $S_i$ is 
some triangulation of the $(n-1)$-sphere.  Using a simplicial 
mapping cylinder construction as in~\cite[2C.5]{hatcher}, use 
each $f'_i$ to attach a triangulated $n$-cell to $L_1$ to produce 
$L_2$, an $R$-acyclic simplicial complex with $\dim(L_2)=n$ and 
such that $L\leq L_1\leq L_2$.  
By Lemma~\ref{relbary}, we may take $K$ to be the relative barycentric 
subdivision $K=(L_2,L)'$.  
\end{proof} 

\begin{proof}[ of Theorem~\ref{cdbbgp}] If $S$ is any field, and 
$R$ is the smallest subfield of $S$, then $R$ and $S$ satisfy the 
conditions of statement (4) of Proposition~\ref{cdprops}.  Hence 
there are equalities of functions $\cd_R=\cd_S$ and $\tcd_R=\tcd_S$. 
Thus it suffices to prove Theorem~\ref{cdbbgp} in the case when 
$R$ is either the field of $p$ elements or a subring of $\qq$.  

We may assume that $\dim(L)=n<\infty$.  In the case when
$\tcd_R(L)=n$, Proposition~\ref{trivcohdim} and part (1) of 
Theorem~\ref{cdnonflag} imply that 
$$n+1=\tcd_R(\tilt_L) \leq cd_R(\tilt_L)\leq \dim(\tilt_L)=n+1,$$
as claimed.  
If $\tcd_R(L)<n$, then by Lemma~\ref{flagemb}, there is an
$n$-dimensional $R$-acyclic flag complex $K$ containing $L$ as a 
full subcomplex.  Part (2) of Theorem~\ref{cdnonflag} tells us
that $\tcd_R(K)=\cd_R(K)=n$.  Now $H_L$ is a retract of $H_K$, and so 
by part (6) of Proposition~\ref{cdprops} 
$$\cd_R(H_L)=\cd_R(\tilt_L)\leq\cd_R(H_K)=\cd_R(\tilt_K)=n.$$ 
This gives 
$$n=\tcd_R(\tilt_L)\leq \cd_R(\tilt_L)\leq \cd_R(\tilt_K)=n,$$
as claimed.  
\end{proof} 

\section{Examples} 

\begin{example}\label{cyclicmoore} 
Let $L=L(m)$ be a flag triangulation of the space constructed by attaching
a disc to a circle via a map of degree $m$.  This $L$ has the property 
that $\tcd_R(L)=0$ if $m$ is a unit in $R$ and $\tcd_R(L)=2$ if $m$ is 
not a unit in $R$.  Let $H=H_L$ be the corresponding Bestvina-Brady
group.  From Theorem~\ref{cdnonflag} and
Proposition~\ref{trivcohdim}, it follows that for any ring $R$, 
$\cd_R(H)=2$ if $m$ is a unit in $R$ and $\cd_R(H)=3$ if $m$ is
not a unit in $R$.  
\end{example} 

\begin{example}\label{rationalmoore}
Let $L$ be a flag triangulation of a 2-dimensional Eilenberg-Mac~Lane
space for the additive group of $\qq$.  Then $\tcd_F(L)=1$ for any 
field~$F$, while $\tcd_\zz(L)=2$.  From Theorem~\ref{cdbbgp} it 
follows that the Bestvina-Brady group $H=H_L$ has the property 
that $\cd_F(H)=2$ for any field~$F$, while $\cd_\zz(H)=3$.  
\end{example} 

Groups having similar properties to those given in
Example~\ref{cyclicmoore} have been 
constructed previously by many authors, using finite-index subgroups
of right-angled Coxeter groups~\cite{bes,dile,dran}.  The examples 
coming from Coxeter groups have the advantage that they are of type 
$FP$, whereas they have the disadvantage that the trivial
cohomological dimension of these groups appears to be unknown.  

Finite-index subgroups of (non-finitely generated) 
Coxeter groups were used in~\cite{dile} to 
construct groups having similar properties to those given in 
Example~\ref{rationalmoore}.  Again, the trivial cohomological
dimension of those examples appears to be unknown.  It can be 
shown that for any group $G$ of type $FP$, there is a field $F$ such
that $\cd_F(G)=\cd_\zz(G)$.  (See Proposition~9 of~\cite{dile}.)


\leftline{\bf Authors' addresses:}

\noindent 
Ian Leary: Department of Mathematics, The Ohio State University, 
231 West 18th Avenue, Columbus, Ohio 43210-1174, United States.  

and 

\noindent 
School of Mathematics, University of Southampton, Southampton, 
SO17 1BJ, United Kingdom. 

{\tt leary@math.ohio-state.edu} 

\bigskip
\noindent 
M\"uge Saadeto\u{g}lu: School of Mathematics, University of
Southampton, \\
Southampton,  SO17 1BJ, United Kingdom. 

and 

\noindent 
AS277, Department of Mathematics, Eastern Mediterranean University, \\
Gazima\u{g}usa, Mersin 10, Turkey 

{\tt muge.saadetoglu@emu.edu.tr}

\end{document}